\newtheorem{theorem}{Theorem}
\newtheorem{remark}[theorem]{Remark}
\newtheorem{corollary}[theorem]{Corollary}
\newtheorem{lemma}[theorem]{Lemma}
\newcommand{\Z}{{\mathbb Z}}
\newcommand{\gr}{\gamma_{gr}}
\newcommand{\R}{\mathcal R}
\newcommand{\GR}{G\hookleftarrow \R}
\begin{document}

\title{Grundy dominating sequences on $X$-join product\thanks{\small Partially supported by grants PICT-2016-0410 (MINCyT 2017-2019) and PID-UNR ING 538 (2017-2020).}}

\author{
Graciela Nasini
\and
Pablo Torres
}

\date{}

\maketitle

\begin{center}
Facultad de Ciencias Exactas, Ingenier\'ia y Agrimensura, Universidad Nacional de Rosario

and\\
Consejo Nacional de Investigaciones Cient\' ificas y T\' ecnicas, 
Argentina.\\
\{\texttt{nasini,ptorres}\}\texttt{@fceia.unr.edu.ar} \\
\end{center}

\begin{abstract}
{\noindent
In this paper we study the Grundy domination number on the $X$-join product $\GR$ of a graph $G$ and a family of graphs $\R=\{G_v: v\in V(G)\}$.  The results led us to extend the few known families of graphs where this parameter can be efficiently computed. We prove that if, for all $v\in V(G)$, the Grundy domination number of $G_v$ is given, and $G$ is a power of a cycle, a power of a path, or a split graph, computing the Grundy domination number of $\GR$ can be done in polynomial time. In particular, the results for power of cycles and paths are derived from a polynomial reduction to the Maximum Weight Independent Set problem on these graphs. 

As a consequence, we derive closed formulas to compute the Grundy domination number of the lexicographic product $G\circ H$ when $G$ is a power of a cycle, a power of a path or a split graph, generalizing the results on cycles and paths given by Bre\v sar et al. in 2016. Moreover, the results on the $X$-join product when $G$ is a split graph also provide polynomial-time algorithms to compute the Grundy domination number for $(q,q-4)$ graphs, partner limited graphs and extended $P_4$-laden graphs,  graph classes which are high in the hierarchy of few $P_4$'s graphs.
}

\end{abstract}

\noindent
{\bf Keywords:}  Grundy dominating sequences, $X$-join product, split graphs, power of paths, power of cycles\\
\section{Introduction}

In a similar fashion as Grundy number of graphs relate to greedy colorings, Grundy \emph{domination} number refers to greedy dominating sets. A \emph{greedy domination procedure} applied to a graph generates a sequence of vertices  such that 
each vertex in the sequence has a \emph{private neighbour} which has not been dominated by the previous ones and every vertex in the graph is dominated by at least one vertex in the sequence. We say that such a sequence of vertices is  \emph{legal dominating}. While the length of a shortest legal dominating sequence is the domination number of $G$, the length of a longest one provides an upper bound for the size of dominating sets that can be constructed by a greedy domination procedure. Regarding algebraic properties, a strong connection between the Grundy domination number and the zero forcing
number of a graph \cite{Zero} was established in \cite{BresarZero}. 

A \emph{Grundy dominating sequence} (\textit{Gds} for short) is a legal dominating sequence of maximum length and its length, denoted $\gr(G)$, is the \emph{Grundy domination number of} $G$ (technical definitions are given in the next section). 
In \cite{martin} it is proved that obtaining the Grundy domination number is 
$NP$-hard, even for chordal graphs. On the other hand, efficient algorithms for trees, cographs and split graphs have been presented.

As it is known, every branch of mathematics employs some notion of a product that enables the combination
or decomposition of its elemental structures. In graph theory, cartesian product, strong product, direct product and lexicographic product are the four main products, each with its own set of applications and theoretical interpretations (see, e.g. \cite{book}). Moreover, knowing the relationship among graph parameters of the product and those of its factors allows 
the study of these parameters in graph families  
and usually derive in the design of efficient algorithms to compute them (see, e.g. \cite{FewP4s}).

In particular, the known modular decomposition of graphs \cite{modular1} can be seen as obtaining the prime factors of a graph with respect to the $X$-\emph{join product}, an important generalization of the lexicographic product, introduced by Hiragushi in 1951 \cite{Hir51}. This product is also known as \emph{joint of family of graphs} \cite{Jol73}, \emph{substitution decomposition} \cite{MR84} and \emph{lexicographic sum} \cite{Sabidussi}. In this paper, we use the name $X$-join and we adopt the following notation. Given a graph $G$ and a graph family $\mathcal R=\{G_v: v\in V(G)\}$, the $X$-join product of $G$ and $\mathcal R$ is the graph denoted by $\GR$ and obtained by replacing every vertex $v$ of $G$ by the graph $G_v$.
If every $G_v$ is isomorphic to a graph $H$ we obtain the lexicographic product of $G$ and $H$, denoted by $G\circ H$.
Moreover, for graphs $G$ of order two and $\mathcal R=\{G_1,G_2\}$, if $G$ is complete (resp. edgeless) then $\GR$ is the \emph{complete join} (resp. disjoin union) of $G_1$ and $G_2$, usually denoted by $G_1\vee G_2$ (resp. $G_1+G_2$).

In \cite{Bresar1} the Grundy domination number of grid-like, cylindrical and toroidal graphs was
studied. More precisely, the four standard graph products of paths and/or cycles were considered, and exact formulas for the Grundy domination numbers were obtained for most of the products with two path/cycle factors. 
Concerning the lexicographic product they provided a formula for $\gr (G \circ H)$ which depends on $\gr(H)$ and a particular parameter over dominating sequences of $G$. In particular, closed formulas in terms of $\gr(H)$ are obtained when $G$ is a path or a cycle.




In this paper we study the parameter Grundy domination number for the $X$-join product of graphs, generalizing the results in \cite{Bresar1} for the lexicographic product.
We prove that, if $G$ is a power of a cycle or a power of a path, given $\gr(G_v)$ for all $v\in V(G)$, computing $\gr(G\hookleftarrow \R)$ can be reduced (in polynomial time) to the Maximum Weight Independent Set problem on $G$, which can be solved in polynomial time (see, e.g., \cite{mwis}). 
We also prove that $\gr(G\hookleftarrow \R)$ is polynomial-time computable when $G$ is a split graph, with similar consequence for the lexicographic product.
These results 
also provide polynomial-time algorithms to compute the Grundy domination number for several graph classes that are high in the hierarchy of the classes of graphs recognizable by modular decomposition. More specifically, we work with $(q,q-4)$ graphs \cite{qq-4}, partner limited graphs ($PL$) \cite{Roussel} and extended $P_4$-laden graphs ($EP_4L$) \cite{laden1} for which their prime factors are a subclass of split graphs, paths and cycles and their complements, and a family of graphs with bounded order. These graph classes generalize several other ones having \emph{few} $P_4$'s such as cographs, $P_4$-tidy, $P_4$-laden and $(7, 3)$ graphs.

\subsection{Preliminaries and notations}

Given $n,m\in \mathbb{N}_0$, $[n,m]$ denotes the set $\{t\in \mathbb{N}_0: n\leq t \leq m\}$ and $[n]=[1,n]$ with the addition modulo $n$. 
Given $a, b \in [n]$, let $t$ be the minimum non-negative integer such that $a + t = b$.
Then, $[a, b]_n$ denotes the circular interval defined by the set $\{a + s : 0 \leq s \leq t\}$. Similarly, $(a, b]_n$, $[a, b)_n$, and $(a, b)_n$
correspond to $[a, b]_n \setminus \{a\}$, $[a, b]_n \setminus \{b\}$, and $[a, b]_n \setminus \{a, b\}$, respectively.

In addition, $K_n$, $S_n$,  $C_n$ ($n\geq 3$), and $P_n$ are, respectively, the complete graph, graph, the cycle, and the path of order $n$.
For all these graphs the set of vertices is $[n]$ and the edge of $C_n$ (resp. $P_n$) are of the form $\{i,i+1\}$ for $i\in [n]$ (resp. for $i\in [n-1]$.)

For a connected graph $G$, the \emph{distance} between two vertices $u,\ v$ in $G$ is the number of edges in a shortest path connecting them. For $m\in\mathbb Z_+$, the $m$-\emph{th power} $G^m$ of $G$ is the graph with the same vertex set that $G$ and two vertices are adjacent in $G^m$ if the distance between them in $G$ is at most $m$.

Given a graph $G$ and $U\subset V(G)$, $G-U$ denotes the graph obtained from $G$ by deleting the vertices in $U$ and $G[U]=G-(V(G)\setminus U)$ denotes the subgraph induced by vertices in $U$. A subset $U$ is a clique of $G$ if $G[U]$ is a complete graph.
 
Given $v\in V(G)$, $N(v)$ denotes the set of neighbors of $v$ 
and $N[v]=N(v)\cup \{v\}$.
A dominating set of $G$ is a subset of vertices $D$ such that every vertex outside of $D$ has a neighbor in $D$, i.e. $D\cap N[v]\neq \emptyset$ for all $v\in V(G)$.

A subset of vertices $I\subset V(G)$ is an independent set of $G$ if no pair of elements in $I$ are adjacent in $G$. The \emph{independence 
number} of $G$ is the maximum cardinality amount its independent sets and it is denoted by $\alpha(G)$. Given a vector of weights $w\in \mathbb{R}^{V(G)}$ and a subset of vertices $U$, its weight is $w(U)=\sum_{v\in U} w_v$.
The maximum weight of an independent set in $G$ is denoted by $\alpha_w(G)$. In the \emph{Maximum Weight Independent Set ($MWIS$) problem}, the input is a graph $G$ and a weight vector $w$ and the output is an independent set $I^*$ of $G$ of weight $\alpha_w(G)$. This problem is $NP$-hard for general graphs and polynomial-time solvable for powers of paths and cycles (see e.g. \cite{clawfree}).

Given $v\in V(G)$ and a graph $H$, the graph obtained by \emph{replacing} $v$ \emph{by} $H$ is the graph with vertex set $(V(G)\setminus\{v\})\cup V(H)$ and whose edges are $E(G-\{v\})\cup E(H)$ together with all the edges connecting a vertex in $V(H)$ with a vertex in $N(v)$. We denote it $G_{v\hookleftarrow H}$. Observe that, if $H$ is the trivial graph with one vertex and no edges, $G_{v\hookleftarrow H}$ is isomorphic to $G$. 

Given $G$ and a family of graphs $\R=\{G_v: v\in V(G)\}$, we denote $G\hookleftarrow \R$ the graph obtained by replacing each vertex $v$ in $G$ by $G_v$, i.e. $\GR$ is the $X$-join product obtained from $G$ and the graphs in $\R$. We say that $G$ is the \emph{main factor} in $\GR$.

A graph $H$ is \emph{prime} for the $X$-join product 
if $H=\GR$ implies either $G$ isomorphic to $H$ and all the graphs in $\R$ are trivial graphs or $G$ is trivial and the graph in $\R$ is isomorphic to $H$.

As it was mentioned before, several operations in graphs can be reformulated in terms of the $X$-join product of graphs.
In particular, the \emph{lexicographic product} of two graphs $G$ and $H$, denoted by $G\circ H$, is the graph with vertex set $V(G)\times V(H)$ and two vertices $(g_1, h_1)$ and $(g_2, h_2)$ are adjacent if either $\{g_1,g_2\}\in E(G)$, or $g_1 = g_2$ and $\{h_1,h_2\}\in E(H)$. It is easy to see that, if $G_v=H$ for all $v\in V(G)$, $G\hookleftarrow \R=G\circ H$. 
Moreover, the disjoint union (resp. the complete join) of disjoint graphs $G$ and $H$, denoted by $G + H$ (resp. $G\vee H$) is the $X$-join product obtained from $S_2$ (resp. $K_2$) and the list $\R=\{G,H\}$.
From the proof of Theorem 2.7 in \cite{martin}, it can be derived that \textit{Gds}'s of $G_1+ G_2$ and $G_1\vee G_2$ can be obtained in constant time from \textit{Gds}'s of $G_1$ and $G_2$.

In the context of this paper we say that a graph is \emph{modular} if it and its complement are connected. 
Clearly, if $G$ is not modular, there exist two disjoint subgraphs $G_1$ and $G_2$ such that $G=G_1+G_2$ or $G=G_1\vee G_2$. 

Moreover, a graph $G$ is called \emph{indecomposable} if for all $U\subset V(G)$ with $2\leq |U|\leq |V(G)|-1$, there exist two vertices $u, v\in U$ such that $N(v)\setminus U \neq N(v)\setminus U$. 
It is not hard to see that, if $G$ is not indecomposable and $U\subset V(G)$ verifies that $N(v)\setminus U =N(u)\setminus U$ for all $u,v\in U$, then for any $u\in U$, $G$ can be obtained from $G-(U\setminus \{u\})$ by replacing $u$ by $G[U]$. 
Then, a graph is indecomposable if and only if it is prime with respect to the $X$-join product. In the following, we will refer to indecomposable graphs as \emph{prime graphs}.

The modular decomposition process applied to a graph $G$ finds in linear time its modular subgraphs and the sequence of disjoint union and complete join operations that have to be performed in order to recover $G$ from them.  

Denoting by $M(\mathcal{F})$ the set of modular graphs in a family of graphs $\mathcal{F}$, we easily obtain the following:
\begin{remark}\label{union}
Let $\mathcal F$ be a family of graphs such that a \textit{Gds} can be obtained in polynomial (resp. linear) time for graphs in $M(\mathcal F)$. Then, a \textit{Gds} can be obtained in polynomial (resp. linear) time for every graph in $\mathcal F$.
\end{remark}

The modular decomposition process continue by finding, also in linear time, for each modular no prime subgraph $H$, a prime subgraph $H'$ of $H$ and a list $\mathcal H=\{H_v: v\in V(H')\}$ of subgraphs of $H$, such that $H= H'\hookleftarrow \mathcal H$. For details on modular decomposition see, e.g. \cite{modular1,Roussel}. 

Given a sequence $S=(v_1,\ldots,v_k)$ of distinct vertices of $G$, the corresponding set $\{v_1,\ldots,v_k\}$ of vertices from the sequence $S$ will be denoted by $\widehat S$ and, for simplicity, $|S|$ represents $|\widehat S|$. The \emph{order} of a vertex $v_i$ in $S=(v_1,\ldots,v_k)$ is $O_S(v_i)=i$. We denote by $S^{-1}$ the sequence $S$ traveled in reverse order, i.e. $S^{-1}=(v_k,v_{k-1},\dots,v_1)$.
Given $U\subset V(G)$, $(U)$ denotes any sequence $S$ such that $\hat S=U$. The \emph{empty sequence} is denoted by $S=(\;)$. 

Let $S=(v_1,\ldots , v_n)$ and $S'=(u_1,\ldots , u_m)$ be two sequences in $G$ with $\widehat S\cap \widehat S'=\emptyset$. The {\em concatenation} of $S$ and $S'$ is defined as the sequence $S \oplus S'=(v_1,\ldots , v_n,u_1,\ldots , u_m)$. Moreover, $(\;)\oplus S= S \oplus (\;)= S$, for any sequence $S$. Clearly $\oplus$ is an associative operation on the set of all sequences, but is not commutative.

By using concatenation of sequences, we can write $S= \bigoplus_{i=1}^n (v_i)$. Following this notation, a \emph{subsequence} of $S$ 
is a sequence of the form %
$\bigoplus_{i=j}^k (v_i)$
with $1\leq j\leq k\leq n$.

If $T=\bigoplus_{i=j}^k (v_i)$ is a subsequence of $S$, $S_{T\hookleftarrow S'}$ is the sequence obtained from $S$ replacing $T$ by $S'$, i.e. $S_{T\hookleftarrow S'}=(v_1,\dots,v_{j-1})\oplus S'\oplus (v_{k+1},\dots,v_n)$. In particular, when $T=(w)$ or $S'=(w)$ 
we denote 
$S_{w\hookleftarrow S'}$ or $S_{T\hookleftarrow w}$, respectively.

In the context of \emph{dominating sequences}, given $S=(v_1,\ldots,v_k)$, 
for all $i\in [k]$, the \emph{private neighborhood of} $v_i$ \emph{with respect to} $S$ is $PN_S(v_i)= N[v_i] \setminus \bigcup_{j=1}^{i-1}N[v_j]$. A sequence $S=(v_1,\ldots,v_k)$ is a {\em legal dominating sequence of} $G$ if $\widehat{S}$ is a dominating set of $G$ and $PN_S(v_i) \ne\emptyset$ holds for every $i\in [k]$.
We denote by $\mathcal L (G)$ the set of all legal dominating sequences of $G$. 
A \emph{Grundy dominating sequence} (\textit{Gds} for short) is a legal dominating sequence of maximum length. We denote by $\mathcal{G}r(G)$ the set of all \textit{Gds}'s of $G$.  The \emph{Grundy domination number} of $G$ is the length of a \textit{Gds} in $G$ and it is denoted by $\gr(G)$.

The next result is a straightforward counting observation.

\begin{remark}\label{cotalegal}
Let $G$ be a graph,  $S=(v_1,\ldots, v_k)\in \mathcal L(G)$ and $R\subset [k]$. Then, $|S|-|R|\leq |V(G)|-\left|\bigcup_{i\in R} PN_S(v_i)\right|$. In particular, $\forall r\in [k]$, if $R=[r]$, $\bigcup_{i\in R} PN_S(v_i)=\bigcup_{i=1}^r N[v_i]$ and then, $|S|\leq|V(G)|-\left|\bigcup_{i=1}^r N[v_i]\right|+r$. 
\end{remark}

Given $S\in \mathcal L(G)$, we say that $v\in  \widehat S$ is \emph{the footprinter} of 
every vertex in $PN_S(v)$.
Clearly, every vertex in $V(G)$ has a unique footprinter and the function $f_S : V(G)\rightarrow \widehat S$ that maps each vertex to its footprinter is well defined. Moreover,  defining $I_S=\{v\in V: f_S(v)=v\}$, it is easy to see the following:
\begin{remark}\label{indgr}
For every $S\in \mathcal L(G)$, $I_S$ is an independent set of $G$.
\end{remark}
 
Moreover, given an independent set $I$ of $G$, we define $\mathcal L (G,I)=\{ S\in \mathcal L(G): I_S=I\}$ and considering that the maximum of the empty set is $-\infty$, $\gr(G,I)=\max\{|S|: S\in \mathcal L(G,I)\}$. We have 

\begin{remark}\label{gdsI}
$\gr(G)=\max \{\gr(G,I): I \text{ is an independent set of } G\}$.
\end{remark}

%


\section{\textit{Gds}'s on $X$-join product of graphs}

In this section we are interested in general properties of legal dominating sequences of $X$-join product of graphs. 

Given a graph $G$, $\R=\{G_v: v\in V(G)\}$, $S=(w_1,\ldots, w_n)$ a sequence of vertices of $G$, and $v\in V(G)$, we define $n_S(v)=|\widehat S \cap V(G_v)|$. If $n_S(v)\geq 1$, we denote as $\ell_S(v)$ the vertex in $ \widehat S\cap V(G_v)$ with the lowest order in $S$.

\begin{lemma}\label{almenos}
Let $G$ be a graph and $\R=\{G_v: v\in V\}$. Then, 
\begin{equation}\label{grGI}
\gr(\GR)\geq \max \left\{\gr(G,I)+\sum_{v\in I} \gr(G_v)-|I|: I \text{ is an independent set of } G\right\}.
\end{equation}
\end{lemma}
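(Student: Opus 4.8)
The plan is to prove the inequality by exhibiting, for each independent set $I$ of $G$ with $\mathcal L(G,I)\neq\emptyset$, a single legal dominating sequence of $\GR$ whose length equals $\gr(G,I)+\sum_{v\in I}\gr(G_v)-|I|$; taking the maximum over all such $I$ (the independent sets with $\mathcal L(G,I)=\emptyset$ contribute $-\infty$ and may be ignored) then yields (\ref{grGI}). Fix such an $I$ and choose $S=(v_1,\dots,v_k)\in\mathcal L(G,I)$ with $|S|=\gr(G,I)$, so that $k=\gr(G,I)$ and $I_S=I$.

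I would build the candidate sequence $\tilde S=B_1\oplus\cdots\oplus B_k$ block by block, reading $S$ in order: if $v_i\in I$, let $B_i$ be an arbitrary \textit{Gds} of $G_{v_i}$, and if $v_i\notin I$, let $B_i=(x^i)$ be a single, arbitrarily chosen vertex of $G_{v_i}$. Since distinct blocks live in the vertex-disjoint copies $G_{v_i}$ and each $B_i$ consists of distinct vertices, $\tilde S$ is a sequence of distinct vertices, and its length is $\sum_{v_i\in I}\gr(G_{v_i})+|\{i:v_i\notin I\}|=\sum_{v\in I}\gr(G_v)+(k-|I|)$, exactly the target value. It then remains to verify $\tilde S\in\mathcal L(\GR)$, i.e. that $\widehat{\tilde S}$ dominates $\GR$ and that every element of $\tilde S$ has a nonempty private neighbourhood.

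For domination I would use $N_{\GR}[x]=N_{G_v}[x]\cup\bigcup_{u\in N_G(v)}V(G_u)$ for $x\in V(G_v)$, together with the fact that $S$ dominates $G$. A vertex $y\in V(G_w)$ with $w\in I$ is dominated inside $G_w$ by its own block, which is a \textit{Gds} of $G_w$; for any other $w$, the vertex $w$ is dominated in $G$ by a sequence vertex $v_j\neq w$, hence $v_j\in N_G(w)$, and then every vertex of $B_j$ is adjacent in $\GR$ to all of $V(G_w)$, so $y$ is dominated. The heart of the argument is the private-neighbourhood condition, where the defining property of $I=I_S$ is precisely what makes the blow-up legal: for $v_i\in I$ we have $v_i\in PN_S(v_i)$, i.e. $v_i\notin N_G[v_l]$ for every $l<i$, which translates in $\GR$ into the statement that no vertex of any earlier block $B_l$ ($l<i$) is adjacent to any vertex of $V(G_{v_i})$. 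Consequently all of $V(G_{v_i})$ is still undominated when $B_i$ begins, so the internal private neighbour of each element of the \textit{Gds} $B_i$ (which exists since $B_i\in\mathcal L(G_{v_i})$) remains private in $\tilde S$. For $v_i\notin I$, I would instead take a private neighbour $p_i\in PN_S(v_i)$ of $v_i$ in $G$; since $p_i\neq v_i$ we have $p_i\in N_G(v_i)$, so $x^i$ is adjacent to all of $V(G_{p_i})$, while $p_i\notin N_G[v_l]$ for $l<i$ guarantees that $V(G_{p_i})$ is untouched by the earlier blocks, giving $V(G_{p_i})\subseteq PN_{\tilde S}(x^i)\neq\emptyset$.

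The step I expect to be the main obstacle, and the one deserving the most care, is this private-neighbourhood verification, in particular making fully rigorous the translation ``$v_i$ is a self-footprinter in $S$'' $\Rightarrow$ ``$V(G_{v_i})$ is entirely fresh at the start of $B_i$'', and checking that the private neighbours used for the two kinds of blocks are never dominated by elements occurring earlier in $\tilde S$ (both across earlier blocks and, in the $v_i\in I$ case, within $B_i$ itself). Once this is settled, the length computation is immediate and the maximum over independent sets delivers (\ref{grGI}).
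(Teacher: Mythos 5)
Your proposal is correct and follows essentially the same construction as the paper: take an optimal $S\in\mathcal L(G,I)$, replace each $v\in I$ by a \textit{Gds} of $G_v$ and each other vertex of $S$ by a single vertex of its copy, and compute the length. The paper dismisses the legality of the resulting sequence with ``Clearly, $\tilde S\in \mathcal L(\GR)$,'' whereas you supply the verification (domination and private neighbourhoods across and within blocks) explicitly, which is exactly the detail being glossed over.
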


\begin{proof}
Let $I$ be an independent set of $G$. We need to prove that
$$\gr(\GR)\geq \gr(G,I)+\sum_{v\in I} \gr(G_v)-|I|.$$

If $\mathcal L(G,I)=\emptyset$, the inequality follows immediately. 

Otherwise, for each $v\in I$, let $S_v\in \mathcal Gr(G_v)$ and, if $v\in V(G)\setminus I_S$, let $S_v=(w_v)$ for some $w_v\in V(G_v)$.

Given $S\in \mathcal L(G,I)$ such that $|S|=\gr(G,I)$, let $\tilde S$ the sequence obtained by replacing each $v\in I$ by $S_v$. Clearly, $\tilde S\in \mathcal L(\GR)$. Then, $\gr(\GR)\geq |\tilde S|= \gr(G,I)+\sum_{v\in I} \gr(G_v)-|I|$.
\end{proof}

We will see that the equality holds in (\ref{grGI}). We first need the following technical result.

\begin{lemma}
Let $S\in \mathcal Gr(\GR)$. Then, there exists $S'\in \mathcal Gr(\GR)$ verifying that, for all $v\in V(G)$ such that $\ell_{S'}(v)\in I_{S'}$, it holds: 

\begin{enumerate}
	\item \label{first} $f_{S'}(w)\in V(G_v)$ for all $w\in V(G_v)$ and
	\item \label{second} $|\widehat{S'} \cap V(G_v)|=\gr (G_v)$.
\end{enumerate}
\end{lemma}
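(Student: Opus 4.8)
The plan is to produce $S'$ from $S$ by a sequence of length-preserving surgeries, each one \emph{front-loading} a Grundy dominating sequence of a single copy. I would begin by recording the adjacency of the $X$-join: for $x\in V(G_v)$ we have $N_{\GR}[x]=N_{G_v}[x]\cup\bigcup_{u\in N_G(v)}V(G_u)$, so a vertex of $G_v$ is dominated either from inside $G_v$ or by \emph{any} vertex lying in an adjacent copy. Two consequences follow immediately and drive the whole argument. First, $\ell_S(v)\in I_S$ holds precisely when no vertex of an adjacent copy precedes $\ell_S(v)$ in $S$ (before that position nothing dominates $\ell_S(v)$, because it is the first vertex of $V(G_v)$ in the sequence). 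Second, if $p$ denotes the position of the first vertex of $S$ belonging to $\bigcup_{u\in N_G(v)}V(G_u)$, then that single vertex already dominates all of $V(G_v)$; hence at most one vertex outside $G_v$ can be a footprinter of a vertex of $G_v$, and every vertex of $V(G_v)$ footprinted after $p$ is footprinted by it.

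Now fix a copy $v$ with $\ell_S(v)\in I_S$ that violates condition~\ref{first} or condition~\ref{second}, and let $T_v$ be a fixed element of $\mathcal{G}r(G_v)$. The surgery replaces the entire $V(G_v)$-contribution of $S$ by $T_v$, inserted contiguously at the position occupied by $\ell_S(v)$, and deletes from $S$ every other vertex of $V(G_v)$. Because $T_v$ is a legal dominating sequence of $G_v$ and is placed before $p$, every $w\in V(G_v)$ now receives a footprinter inside $G_v$, which is condition~\ref{first}. Moreover $T_v$ dominates $V(G_v)$ together with all adjacent copies, so after $T_v$ no vertex of $V(G_v)$ can retain a private neighbour; thus exactly $|T_v|=\gr(G_v)$ vertices of $V(G_v)$ survive, which is condition~\ref{second}. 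Domination of the other copies is not endangered, since any vertex of $G_v$ dominates the same adjacent copies as the deleted ones.

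The crux---and the step I expect to be the main obstacle---is to check that the surgery keeps the sequence inside $\mathcal{G}r(\GR)$, i.e. that it preserves both legality and length. Front-loading $T_v$ dominates vertices earlier than before, which can empty the private neighbourhood of a vertex $z\notin V(G_v)$ whose footprint lay in $V(G_v)$ or in an adjacent copy now covered by $T_v$; such $z$ become illegal and must also be removed, producing a cascade that has to be bounded. I would handle this by a charging argument against the maximality of $S$: since $S$ is a \textit{Gds}, it suffices to show the surgery does not shorten the sequence, and the deletions---the vertices of $V(G_v)$ footprinting only outside $G_v$, together with the unique external footprinter of $V(G_v)$ identified above---are charged to the vertices of $T_v$ by using that the within-copy footprinters of $V(G_v)$ already form a legal sequence of $G_v$, which can be completed to a legal dominating one and hence has length at most $\gr(G_v)$, together with Remark~\ref{cotalegal} applied inside $G_v$. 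Verifying that this charging is length-neutral, rather than merely non-increasing, is the delicate point.

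Finally, to assemble the global statement I would perform the surgeries in increasing order of the position of $\ell_S(v)$. Processing the earliest copy first guarantees that a copy already made to satisfy conditions~\ref{first}--\ref{second} is self-footprinted and therefore never disturbed by later surgeries, and that each step strictly decreases the number of violating copies; the process thus terminates at a sequence $S'\in\mathcal{G}r(\GR)$ with the required properties for every $v$ with $\ell_{S'}(v)\in I_{S'}$.
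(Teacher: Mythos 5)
Your construction is genuinely different from the paper's, and I believe it can be pushed through, but the step you yourself flag as ``the delicate point'' is a real gap, not a routine verification, and the charging you describe does not close it. What must be shown is that the number of deletions is at most the number of insertions, $|T_v|=\gr(G_v)$. First, an observation that actually helps you: since $\ell_S(v)\in I_S$, the vertex $\ell_S(v)$ precedes every other vertex of $\widehat S\cap V(G_v)$ and already dominates every copy adjacent to $v$, so \emph{every} vertex of $\widehat S\cap V(G_v)$ has its private neighbourhood inside $V(G_v)$; your category ``vertices of $V(G_v)$ footprinting only outside $G_v$'' is empty, and the whole trace $\widehat S\cap V(G_v)$ is a legal sequence of $G_v$. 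The deletions are therefore this trace plus, possibly, the unique external footprinter $y$ (the first vertex of a copy adjacent to $v$). Your charge --- the trace extends to a legal dominating sequence of $G_v$, hence has length at most $\gr(G_v)$ --- then bounds the deletions only by $\gr(G_v)+1$, one more than the insertions, so it proves neither length-neutrality nor even non-shortening. The missing idea is a case distinction tying the cascade to under-domination: $y$ loses its private neighbourhood (and must be deleted) if and only if $PN_S(y)\subseteq V(G_v)$, which happens exactly when the trace fails to dominate $G_v$; in that case its extension to a legal dominating sequence of $G_v$ is \emph{proper}, so the trace has length at most $\gr(G_v)-1$ and the accounting balances. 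The same case distinction is what preserves domination when $y$ is deleted (any vertex dominated solely by $y$ lies in $PN_S(y)\subseteq V(G_v)$, which $T_v$ covers), and what rules out deletions creating \emph{new} self-footprinted copies --- a possibility your ``strictly decreasing number of violating copies'' claim silently ignores. Also, your reason why processed copies are never disturbed (``self-footprinted, therefore never disturbed'') is not the right one: the protection comes from the fact that copies with $\ell_S(\cdot)\in I_S$ are pairwise non-adjacent in $G$, so the deletions of a later surgery, confined to $V(G_{v'})$ and to the adjacent copy containing its external footprinter, never meet a processed copy.

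For contrast, the paper obtains the lemma with no length bookkeeping at all, in two steps. For condition 1 it swaps one vertex at a time, $S'=S_{f_S(w)\hookleftarrow w}$: the external footprinter is replaced by the very vertex it footprints. Legality is preserved precisely because $\ell_S(v)$, which precedes $f_S(w)$, already dominates all copies adjacent to $v$, so the swap changes nothing visible to later private neighbourhoods, and the length is unchanged by construction. Once condition 1 holds, condition 2 costs nothing: the trace of the sequence on $V(G_v)$ is then a legal \emph{dominating} sequence of $G_v$, and substituting any member of $\mathcal Gr(G_v)$ for it keeps the whole sequence in $\mathcal L(\GR)$; maximality of $S$ then forces the trace to have length exactly $\gr(G_v)$. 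Your front-loading surgery is the constructive counterpart of this, but its price is exactly the cascade-versus-insertion accounting that you left open.
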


\begin{proof}
In order to prove \ref{first}, it is enough to show that if there exist $v \in V(G)$ and $w\in V(G_v)$ such that $f_S(\ell_S(v))=\ell_S(v)$ and $f_S(w)\notin V(G_v)$ then there exists $S'\in \mathcal Gr(\GR)$ such that  
$f_{S'}(w)\in V(G_v)$ and $I_S\subset I_{S'}$.

Let $S'=S_{f_S(w)\hookleftarrow w}$. It is not hard to see that, for all $z\in \widehat{S'}$, $PN_{S'}(z)\neq \emptyset$. Since $S\in \mathcal Gr(\GR)$ and $|S'|=|S|$, $S'\in Gr(\GR)$.

Now, let us prove that, if $S'$ verifies \emph{\ref{first}}, $|\widehat{S'} \cap V(G_v)|=\gr (G_v)$ for all $v\in V(G)$ with $\ell_{S'}(v)\in I_{S'}$. 

Since $S'$ verifies \emph{\ref{first}}, for each $v\in V(G)$ such that $\ell_{S'}(v)\in I_{S'}$, there exists a subsequence $S_v\in \mathcal L(G_v)$ with $\widehat{S'_v}=\widehat{S'} \cap V(G_v)$. 

Moreover, for every $\tilde S_v\in \mathcal Gr(G_v)$, $S'_{S_v\hookleftarrow \tilde S_v}\in \mathcal L(\GR)$. Since $S'\in \mathcal Gr(\GR)$, $|S_v|=|\tilde S_v|=\gr(G_v)$.

%
\end{proof}

Finally, we can prove:

\begin{theorem}\label{gdsgr}
Let $G$ be a graph and $\R=\{G_v: v\in V\}$. Then, 
$$\gr(\GR)=\max \left\{\gr(G,I)+\sum_{v\in I} \gr(G_v)-|I| : I \text{ is an independent set of } G\right\}.$$
\end{theorem}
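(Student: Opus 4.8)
The plan is to prove the reverse inequality to Lemma~\ref{almenos}, namely that
\[
\gr(\GR)\leq \max \left\{\gr(G,I)+\sum_{v\in I} \gr(G_v)-|I| : I \text{ is an independent set of } G\right\},
\]
since the ``$\geq$'' direction is exactly the content of Lemma~\ref{almenos}. Combining the two yields the stated equality.

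To establish the upper bound, I would start from an arbitrary Grundy dominating sequence of $\GR$ and, invoking the technical lemma just proved, replace it by an $S'\in\mathcal Gr(\GR)$ of the same length that satisfies properties \ref{first} and \ref{second}. The point of passing to $S'$ is that it is now ``well-structured'' with respect to the block decomposition induced by the copies $G_v$: for every $v$ with $\ell_{S'}(v)\in I_{S'}$, the footprinters of all vertices of $G_v$ lie inside $G_v$, and exactly $\gr(G_v)$ vertices of the copy are used. The natural candidate independent set of $G$ to feed into the formula is $I=\{v\in V(G): \ell_{S'}(v)\in I_{S'}\}$; the first task is to check that this really is an independent set of $G$, which should follow from Remark~\ref{indgr} applied in $\GR$ together with the fact that adjacency between distinct copies $G_u,G_v$ in $\GR$ mirrors adjacency of $u,v$ in $G$.

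The core of the argument is a counting/decomposition step. I would split the length $|S'|$ as a sum over the copies: those $v$ with $\ell_{S'}(v)\in I_{S'}$ contribute exactly $\gr(G_v)$ vertices by property \ref{second}, and I must control the total contribution of the remaining copies. The goal is to show that projecting $S'$ down to $G$ (recording, for each occupied copy, a single representative vertex, and keeping track of which of these representatives are footprinters of themselves) produces a legal dominating sequence $\bar S$ of $G$ with $I_{\bar S}=I$, so that the number of copies contributing more than their baseline is governed by $|\bar S|\le \gr(G,I)$. Concretely I would argue that the copies indexed by $I$ account for $\sum_{v\in I}\gr(G_v)$ vertices, each of the $|\bar S|$ occupied copies contributes at least one "structural" vertex visible at the level of $G$, and the bookkeeping collapses to $\gr(G,I)+\sum_{v\in I}\gr(G_v)-|I|$, with the $-|I|$ correcting the double count of the one representative per copy in $I$.

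The main obstacle I anticipate is precisely making the projection-to-$G$ rigorous: I must verify that $\bar S$ inherits the private-neighbour (legality) condition from $S'$, that the footprinter bookkeeping transfers correctly so that $I_{\bar S}=I$ holds as an equality (not just an inclusion), and that every vertex of $G$ is dominated by $\bar S$. The delicate case is a copy $G_v$ that is occupied but with $\ell_{S'}(v)\notin I_{S'}$: such a copy should contribute exactly one vertex to the count beyond domination bookkeeping, and I need Remark~\ref{cotalegal} (or a direct private-neighbour argument) to ensure these copies cannot be used to pad the sequence beyond the claimed bound. Once the projected sequence is shown to lie in $\mathcal L(G,I)$ with length at most $\gr(G,I)$, the inequality follows by summing the per-copy contributions, and the theorem is complete.
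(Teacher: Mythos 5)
Your proposal follows essentially the same route as the paper's proof: the lower bound from Lemma~\ref{almenos}, normalization of a Grundy dominating sequence via the technical lemma, the same independent set $I=\{v\in V(G): f_S(\ell_S(v))=\ell_S(v)\}$, projection of the normalized sequence onto $G$ (the paper's $S_G$, replacing each copy's block by the single vertex $v$), and the same count $|S|=|S_G|+\sum_{v\in I}\gr(G_v)-|I|\leq \gr(G,I)+\sum_{v\in I}\gr(G_v)-|I|$. The legality verification you flag as the main obstacle is resolved in the paper exactly by the ``all-or-nothing'' fact you allude to: for $u\neq v$, $PN_S(\ell_S(v))\cap V(G_u)$ is either empty or all of $V(G_u)$, and a copy $G_v$ with $\ell_S(v)\notin I_S$ can contribute only one vertex to $S$.
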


\begin{proof}
Let $S\in \mathcal Gr(\GR)$. We need to prove that there exists an independent set $I$ of $G$ such that 
$$|S|=\gr(\GR)\leq \gr(G,I)+\sum_{v\in I} \gr(G_v)-|I|.$$
Let $I=\{v\in V(G): f_S(\ell_S(v))=\ell_S(v)\}$. Clearly, $I$ is an independent set of $G$.

W.l.o.g. we can assume that $S$ verifies the conditions of Lemma \ref{almenos} and, for all $v\in V(G)$, there is a subsequence $S_v$ of $S$ such that $\hat S_v=\hat S \cap V(G_v)$. Moreover, for all $v\in I$, 
$|S_v|=\gr(G_v)$ and, if $n_S(v)\geq 1$ and $v\notin I$, $n_S(v)=1$.

Let $S_G$ be the sequence of $G$ obtained from $S$ by replacing each subsequence $S_v$ with $|S_v|\geq 1$ by $(v)$

We will prove that $S_G\in \mathcal L(G,I)$.

Let $v\in \widehat{S_G}$. Note that, $PN_S(\ell_S(v))\cap V(G_v)=\emptyset$ if and only if $v\notin I$. Besides, for $u\neq v$, $PN_S(\ell_S(v))\cap V(G_u)$ is $V(G_u)$ or the empty-set. Hence, $PN_{S_G}(v)=\left\{u\in V(G):\ PN_{S_G}(v)\cap V(G_u)\neq\emptyset\right\}$. This remark shows that $\tilde S\in \mathcal L(G,I)$.

Then, 
$$|S|=|S_G|+\sum_{v\in I} \gr(G_v)-|I|\leq \gr (G,I)+\sum_{v\in I} \gr(G_v)-|I|$$
and the proof is complete.
\end{proof}

The following lemma can be easily obtained as a corollary of Theorem \ref{gdsgr} and gives a direct formula for the case of the replacement of one vertex by a graph.

For every $u\in V(G)$, we define $\gr^u(G)=\max\{\gr(G,I): u\in I\}$.
\begin{lemma}\label{remplazovertice}
Let $G$ and $H$ be two disjoint graphs and $u\in V(G)$. 
Then, $\gr(G_{v\hookleftarrow H})=\max \{\gr(G), \gr^u(G)+\gr(H)-1\}$.
\end{lemma}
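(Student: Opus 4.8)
The plan is to derive Lemma~\ref{remplazovertice} directly from Theorem~\ref{gdsgr} by specializing to the $X$-join where exactly one vertex is replaced by a nontrivial graph. Concretely, I would set $\R=\{G_w: w\in V(G)\}$ with $G_u=H$ and $G_w$ the trivial one-vertex graph for every $w\neq u$, so that $G\hookleftarrow\R=G_{u\hookleftarrow H}$. Then Theorem~\ref{gdsgr} gives
\begin{equation*}
\gr(G_{u\hookleftarrow H})=\max\left\{\gr(G,I)+\sum_{w\in I}\gr(G_w)-|I|:\ I\text{ independent in }G\right\}.
\end{equation*}
The whole task reduces to evaluating the inner expression, since $\gr(G_w)=1$ for every trivial factor $G_w$ (a single vertex has $\gr=1$), and only the term $w=u$ can contribute a nonzero correction $\gr(H)-1$.

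I would split the maximization over independent sets $I$ according to whether $u\in I$. If $u\notin I$, then every $w\in I$ has $\gr(G_w)=1$, so $\sum_{w\in I}\gr(G_w)-|I|=0$ and the inner expression is simply $\gr(G,I)$; taking the maximum over all such $I$ recovers $\gr(G)$ by Remark~\ref{gdsI} (restricted to independent sets avoiding $u$, but since adding nothing can only be dominated by the full maximum, this sub-maximum is at most $\gr(G)$, and one checks it attains $\gr(G)$ whenever there is an optimal legal sequence whose $I_S$ omits $u$). If $u\in I$, then $\gr(G_u)=\gr(H)$ while all other factors contribute $1$, so $\sum_{w\in I}\gr(G_w)-|I|=\gr(H)-1$, and the inner expression becomes $\gr(G,I)+\gr(H)-1$; maximizing over independent $I$ containing $u$ gives exactly $\gr^u(G)+\gr(H)-1$ by the definition of $\gr^u(G)$. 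Combining the two cases yields the claimed formula $\max\{\gr(G),\gr^u(G)+\gr(H)-1\}$.

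The one point requiring a little care, and the place I expect the main obstacle, is the $u\notin I$ branch: I must confirm that maximizing $\gr(G,I)$ over independent sets \emph{not} containing $u$ actually reaches $\gr(G)$, rather than a strictly smaller value. The clean way is to observe that the overall maximum in Theorem~\ref{gdsgr} is taken over \emph{all} independent sets, so it automatically dominates both $\gr(G)$ (achieved by the trivial replacement, where $G_{u\hookleftarrow H}$ with $H$ trivial is just $G$, i.e.\ the contribution of any $I$ with $u\notin I$ already sits below $\gr(G)$ via Remark~\ref{gdsI}) and $\gr^u(G)+\gr(H)-1$; conversely the lower bound from Lemma~\ref{almenos} shows $\gr(G_{u\hookleftarrow H})$ is at least each of these two quantities. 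Thus it is cleanest to prove the two inequalities separately rather than to force the partition to be exhaustive: the upper bound comes from Theorem~\ref{gdsgr} by bounding each term ($\gr(G,I)\le\gr(G)$ when $u\notin I$, and $\gr(G,I)\le\gr^u(G)$ when $u\in I$), and the lower bound comes from exhibiting, on one hand, a Gds of $G$ (with $H$ replaced by a single retained vertex) and, on the other, the construction of Lemma~\ref{almenos} applied to an independent set realizing $\gr^u(G)$. This two-sided argument sidesteps any subtlety about whether a single maximization branch is tight.
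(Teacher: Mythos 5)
Your proposal is correct and is essentially the paper's own argument: the paper states this lemma as a direct corollary of Theorem~\ref{gdsgr}, obtained exactly as you do by taking every factor trivial except $G_u=H$ and splitting the maximization over independent sets according to whether $u\in I$. Your treatment of the only delicate point --- that the $u\notin I$ branch need not attain $\gr(G)$ --- is sound, since whenever every optimal $I_S$ contains $u$ one has $\gr(G)=\gr^u(G)\leq \gr^u(G)+\gr(H)-1$ because $\gr(H)\geq 1$, so the claimed maximum is still achieved.
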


In the next two sections we will study the \textit{Gds}'s of $\GR$ when $G=P_n^m$ or $G=C_n^m$, applying Theorem \ref{gdsgr}. Consequentially, we will analyze $\gr(G,I)$, for every independent set $I$ of $G$. In this regard, the next lemma gives us a tool to study this parameter for $G=P_n^m$ or $G=C_n^m$.

\begin{lemma}\label{entrepotcaminos}
Let $G=P_n^m$ or $G=C_n^m$, $i\in [n]$ and $m+1\leq t\leq n$. Let $S\in\mathcal L(G)$ such that $I_S\cap [i,i+t]_n=\{i,i+t\}$ . Then,

$$\left|\widehat S\cap [i,i+t)_n\right|\leq t-m.$$
\end{lemma}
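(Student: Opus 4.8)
The plan is to convert the bound into a statement about \emph{missing} vertices and then read the footprinter map $f_S$ as a forest. Write $A=\widehat S\cap[i,i+t)_n$ and split the half-open interval as $[i,i+t)_n=L\cup R$, where $L=[i,i+t-m)_n=\{i,\dots,i+t-m-1\}$ and $R=[i+t-m,i+t)_n$, so that $|L|=t-m$ and $|R|=m$. Since $\widehat S\cap L=A\cap L$, it suffices to produce an injection $A\cap R\hookrightarrow L\setminus\widehat S$: this gives $|A\cap R|\le|L|-|A\cap L|$, whence $|A|=|A\cap L|+|A\cap R|\le|L|=t-m$. So the whole task reduces to showing that each vertex of $\widehat S$ lying in the right block $R$ can be charged, injectively, to a distinct \emph{absent} vertex of the left block $L$.

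First I would establish a confinement fact: for every $w\in A$ one has $PN_S(w)\subseteq[i,i+t)_n$. Indeed $PN_S(w)\subseteq N[w]\subseteq[w-m,w+m]_n$. For the right end, if $w\in L$ then $w+m\le i+t-1$; if $w\in R$ then $w\in N[i+t]$ and, as $i+t\in I_S$ precedes every vertex of $N[i+t]\cap\widehat S$, all of $N[w]\cap N[i+t]$ is already dominated when $w$ is processed, so $PN_S(w)\subseteq N[w]\setminus N[i+t]\subseteq[w-m,i+t-m-1]_n$. For the left end, any $q\in PN_S(w)$ with $q<i$ would force $w\in N[i]$ and $q\in N[i]$, and then $i\in I_S$ (which precedes $w$) would already have dominated $q$ before $w$, contradicting $q\in PN_S(w)$; hence $q\ge i$. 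The same reasoning, applied to \emph{any} present vertex $u$ processed after $i+t$, upgrades the right bound to $PN_S(u)\subseteq L$: the portion $N[u]\cap N[i+t]$ is predominated, so $PN_S(u)\subseteq N[u]\setminus N[i+t]\subseteq\{\,\cdot\,,\dots,i+t-m-1\}$ irrespective of which side of $N[u]$ it sits on.

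The key step is to follow footprinters downward. Because $f_S(x)$ precedes $x$ whenever $x\notin I_S$, passing from a vertex to the members of its private neighbourhood strictly increases the position in $S$; hence this descendant relation is acyclic, every maximal downward chain is finite, and it must terminate at a vertex with empty private neighbourhood, i.e. at a vertex of $[i,i+t)_n\setminus\widehat S$. Now fix $w\in A\cap R$. Its children lie in $PN_S(w)\subseteq L$ and are processed after $i+t$; by the upgraded confinement, every present descendant again has its children in $L$ and processed after $i+t$. So all proper descendants of $w$ lie in $L$, and in particular some descendant of $w$ is an absent vertex of $L$.

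Finally I would check disjointness. If a single vertex were a descendant of two distinct $w_1,w_2\in A\cap R$, then $w_1,w_2$ would both appear on its (linearly ordered) chain of ancestors, so one would be a descendant of the other; but descendants of an $R$-vertex lie in $L$, and $L\cap R=\emptyset$, a contradiction. Thus the descendant sets of the vertices of $A\cap R$ are pairwise disjoint, and selecting one absent vertex of $L$ from each produces the required injection $A\cap R\hookrightarrow L\setminus\widehat S$. I expect the \emph{upgraded confinement} — that once a present vertex is processed after $i+t$ its private neighbourhood is pushed strictly to the left of the clique $[i+t-m,i+t]_n$ — to be the main obstacle, since this is precisely what traps the descendant sets inside $L$ and makes the counting close; everything else is bookkeeping. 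For $G=C_n^m$ the circular-interval conventions apply verbatim, once one notes that $i\not\sim i+t$ forces $m<t<n-m$, so no wrap-around adjacency interferes with the neighbourhood computations above.
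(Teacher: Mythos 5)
Your proof is correct, and it takes a genuinely different route from the paper's. The paper partitions the interior vertices of $\widehat S$ into the sets $F_+$ and $F_-$ of vertices footprinted from the left and from the right, chooses the vertex of $F_+$ closest to $i$, and via a contradiction argument (resting on the geometric containment $N[v]\subseteq N[i]\cup N[w]$, whose verification depends on where $w$ sits relative to $v$) establishes the structural fact $[i,i+m]_n\subseteq PN_S(i)$; the bound then follows at once, because the private neighbourhoods of the interior present vertices are nonempty, pairwise disjoint, and confined to $(i+m,i+t)_n$, a set of $t-m-1$ vertices. You work at the opposite endpoint, where the needed confinement is essentially free: since $i+t\in I_S$, every present vertex of $N[i+t]$ appears after $i+t$, which pushes the private neighbourhoods of all present interior vertices processed after $i+t$ into your $L$; you then replace the paper's disjoint-private-neighbourhood count by a charging argument through the footprinter forest, assigning to each present vertex of $R$ a distinct absent vertex of $L$. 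What each approach buys: the paper's extremal step proves a stronger structural statement ($i$ footprints its entire right neighbourhood $[i,i+m]_n$) and makes the final count a one-liner, but that step is the delicate point of the argument; your order-based confinement avoids any extremal choice and never has to control private neighbourhoods of vertices processed before $i+t$, at the price of the forest bookkeeping (acyclicity, termination at an absent vertex, disjointness of descendant sets), all of which you verify correctly. Both proofs share the underlying principle that private neighbourhoods of interior vertices cannot pass the roots $i$ and $i+t$, and both treat $P_n^m$ and $C_n^m$ uniformly; note only that, exactly like the paper's own proof, yours implicitly assumes $i\neq i+t$ (i.e. $t<n$), which is how the lemma is actually applied.
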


\proof
Let $v\in (i,i+t)_n$. By hypothesis, $f_S(v)\neq v$ and it is not hard to see that $f_S(v)\in [i,i+t]_n$. 

Let $F_+=\{v\in \widehat S\cap (i, i+t)_n: \ f_S(v)\in [i,v)_n\}$ and $F_-=\{v\in \widehat S \cap (i, i+t)_n:\ f_S(v)\in (v,i+t]_n\}$. We have $F_+\cap F_-=\emptyset$ and $\widehat S \cap (i, i+t)_n=F_+\cup F_-$. We need to prove that $|F_+|+|F_-|\leq t-m-1$

The result is trivial if $F_+=F_-=\emptyset$.  

Assume that $F_+\neq\emptyset$. Let $v$ be the vertex in $F_+$ at minimum distance from $i$. Then, $i=f_S(v)$. Let us show that $[i,i+m]_n\subset PN_S(i)$. First, it is easy to see that $O_S(i)<O_S(u)$ for all $u\in F_+$. Hence assume that there exists $w\in F_-$ such that $PN_S(w)\cap [i,i+m]_n\neq\emptyset$. Then, $O_S(w)<O_S(i)<O_S(v)$, which is a contradiction since $N[v]\subset N[i]\cup N[w]$. So, $[i,i+m]_n\subset PN_S(i)$. Therefore $\left|\widehat S \cap [i+1, i+t-1]_n\right|$ is at most $\left|(i+m,i+t)_n\right|=t-m-1$ and the result follows.

Analogous reasoning is valid if $F_-\neq\emptyset$.
%
%
%
%
%
\qed

\section{\textit{Gds}'s on $X$-join product with a power of a cycle as main factor}

Observe that $C_n^m$ with $2(m+1)> n$ is isomorphic to $K_n$ and in this case $\GR$ is the complete join of graphs in $\R$. Hence, from the results in Theorem 2.7 in \cite{martin}, the \textit{Gds}'s of the graphs in $\R$ with maximum Grundy domination number are \textit{Gds}'s of $\GR$. Then, from now on we assume that, if $G=C^m_n$, $2(m+1)\leq n$. 
Recall that we assumme that $V(C_n^m)$ is $[n]$ with the addition modulo $n$ and $E(C_n)=\{\{i, i+1\}: i=1\ldots,n\}$.

\medskip

Given $i,j$ such that 
$1\leq i\leq j\leq n$, let $S(i,j)=\bigoplus_{t=0}^{j-i}(i+t)$. 

\bigskip

We denote $\mathcal I_2$ the family of independent sets of $C_n^m$ with cardinality at least two. Note that $\mathcal I_2\neq\emptyset$ since $\{1,m+2\}\in\mathcal I_2$.

Given $I=\{i^j: j\in [p]\}\in \mathcal I_2$ with $1\leq i^j < i^{j+1}\leq n$, for $j\in[p-1]$, let $S_j= S(i^j,i^{j+1}-(m+1))$, $j\in [p-1]$. Moreover, we define $S_p=S^{-1}(i^p+(m+1),i_1-1)$ if $i^p+(m+1)\neq i^1$ and $S_p=(\;)$, otherwise. Finally, let $$S_C(I)=\left(\bigoplus_{j=1}^{p-1}S_j\right)\oplus S_p \oplus (i^p).$$

It is not hard to check that, for all $I\in \mathcal I_2$,  $S_C(I)\in \mathcal L (G,I)$ and $|S_C(I)|=n- |I|m$.

For $I=\{k\}\subset [n]$ we define $S_C(I)=S_C(\{k,k+m+1\})$. Observe that $\{k,k+m+1\}\in \mathcal I_2$. We have the following result:

\begin{lemma}\label{SCI}
For any non empty independent set $I$ of $C_n^m$, $\gr(C_n^m,I)=|S_C(I)|$. Then, $\gr(C_n^m,I)= n- |I|m$ if $I\in \mathcal I_2$ and $\gr(C_n^m,I)= n-2m=|S_C(I)|$ if $|I|=1$.
\end{lemma}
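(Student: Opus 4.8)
The plan is to prove the equality $\gr(C_n^m,I)=|S_C(I)|$ by establishing matching lower and upper bounds, handling the cases $I\in\mathcal I_2$ and $|I|=1$ separately. For the lower bounds, when $I\in\mathcal I_2$ nothing new is needed: the sequence $S_C(I)$ was already shown (just before the statement) to lie in $\mathcal L(C_n^m,I)$ with $|S_C(I)|=n-|I|m$, so $\gr(C_n^m,I)\ge n-|I|m$. For a singleton $I=\{k\}$ I would instead exhibit the explicit sequence $S=\bigoplus_{t=0}^{\,n-2m-1}(k+t)$ (consecutive vertices read modulo $n$, of length $n-2m$) and check it belongs to $\mathcal L(C_n^m,\{k\})$: each vertex $k+t$ with $t\ge 1$ is dominated by its predecessor $k+t-1$ (distance $1\le m$), hence is not its own footprinter, while its private neighbour is the single freshly dominated vertex $k+t+m$; the only self-footprinter is $v_1=k$, so $I_S=\{k\}$, the sequence is legal and dominating, and $|S|=n-2m$.

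For the upper bound when $I\in\mathcal I_2$, I would fix any $S\in\mathcal L(C_n^m,I)$, write $I=\{i^1,\dots,i^p\}$ in circular order (indices mod $p$, with $i^{p+1}=i^1$), and set $t_j$ to be the forward gap with $i^{j+1}=i^j+t_j$. Since $\sum_j t_j=n$ and independence of $I$ forces the cycle distance between circularly consecutive elements to exceed $m$, we have $m+1\le t_j\le n$ for every $j$. The circular intervals $[i^j,i^{j+1})_n$, $j\in[p]$, partition $[n]$, and each contains no element of $I_S=I$ other than its endpoints, so Lemma~\ref{entrepotcaminos} applies to every pair and yields $\bigl|\widehat S\cap[i^j,i^{j+1})_n\bigr|\le t_j-m$. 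Summing over $j$ gives
$$|S|=|\widehat S|=\sum_{j=1}^p\bigl|\widehat S\cap[i^j,i^{j+1})_n\bigr|\le\sum_{j=1}^p(t_j-m)=n-pm=n-|I|m,$$
which matches the lower bound. For the singleton upper bound I would note that the first vertex of any legal sequence is always its own footprinter (its private neighbourhood is all of $N[v_1]$), so any $S\in\mathcal L(C_n^m,\{k\})$ begins with $v_1=k$; applying Remark~\ref{cotalegal} with $r=1$ and using $|N[k]|=2m+1$ (valid since $2(m+1)\le n$) gives $|S|\le n-(2m+1)+1=n-2m$. Finally, $|S_C(\{k\})|=|S_C(\{k,k+m+1\})|=n-2m$ by the already-verified length formula on $\{k,k+m+1\}\in\mathcal I_2$, so all the claimed identities follow.

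The step I expect to be the main obstacle is the $\mathcal I_2$ upper bound: the whole argument hinges on verifying, for every circularly consecutive pair, that the hypotheses of Lemma~\ref{entrepotcaminos} hold (namely $m+1\le t_j\le n$ from independence, and that the closed circular interval $[i^j,i^{j+1}]_n$ meets $I$ only in its endpoints), and on checking that the $p$ half-open intervals $[i^j,i^{j+1})_n$ form an honest partition of $[n]$ so that the per-interval bounds add up cleanly to $n-|I|m$. The remaining verifications (legality of the explicit singleton sequence and the $r=1$ instance of Remark~\ref{cotalegal}) are routine by comparison.
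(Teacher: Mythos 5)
Your proof is correct and follows essentially the same route as the paper: the lower bound via $S_C(I)$, the $\mathcal I_2$ upper bound by applying Lemma~\ref{entrepotcaminos} to the circular intervals between circularly consecutive elements of $I$ and summing the per-interval bounds, and the singleton upper bound via Remark~\ref{cotalegal} with $r=1$ after noting the first vertex must be $k$. Your one genuine addition is the explicit sequence $\bigoplus_{t=0}^{n-2m-1}(k+t)$ witnessing $\gr(C_n^m,\{k\})\geq n-2m$; the paper never exhibits a member of $\mathcal L(C_n^m,\{k\})$ of that length (its $S_C(\{k\})$ is defined as $S_C(\{k,k+m+1\})$, whose set of self-footprinters is $\{k,k+m+1\}$ rather than $\{k\}$), so this detail repairs a small gap rather than duplicating the paper.
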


\proof
Let $I=\{i^j: j\in [p]\}$ with $1\leq i^j < i^{j+1}\leq n$, for all $j\in [p-1]$, an independent set of $C_n^m$ and $S \in \mathcal L (G,I)$. We only need to prove that $|S|\leq |S_C(I)|$.

First assume that $|I|\geq 2$ and, for $j\in [p]$ denote $I^j=[i^j,i^{j+1})_n$. From Lemma \ref{entrepotcaminos} we have that:

\begin{enumerate}
	\item for any $j\in [p-1]$, $\left|\widehat S\cap I^j \right|\leq (i^{j+1}-i^j)-m$ and 
	\item $\left|\widehat S\cap I^p \right|\leq (i^1-i^{p})+n-m$. 
\end{enumerate}

Then,

$$|S|=\left(\sum_{j\in [p]}\left|\widehat S\cap	I^j \right|\right) \leq\left(\sum_{j\in [p]}(i^{j+1}-i^j)-m\right)+n =n-pm=|S_C(I)|.$$
	
Finally, let $I=\{k\}$. We need to prove that 
$$|S|\leq |S_C(\{k,k+m+1\})|= n-2m.$$ 

Clearly, $k$ is the first vertex in the sequence $S$ and footprints $2m+1$ vertices. Then, from Remark \ref{cotalegal} $|S|\leq 1+(n-2m+1)=n-2m$ and the theorem holds.
\qed

\medskip

As a direct consequence of previous theorem and Theorem \ref{gdsgr} we obtain:

\begin{theorem}\label{grundyciclos}
Let $G=C^m_n$, $\R=\{G_i:i\in [n]\}$. Then

$$\gr(\GR)= \max_{I \in \mathcal I_2} \left\{\sum_{i\in I} \gr (G_i)- |I|(m+1)\right\} + n.$$

Moreover, given $S_i\in \mathcal Gr(G_i)$ for all $i\in [n]$ and 
$I^*\in \mathcal I_2$ such that

$$\sum_{i\in I^*} \gr (G_i)- |I^*|(m+1)=\max_{I \in \mathcal I_2} \left\{\sum_{i\in I} \gr (G_i)- |I|(m+1)\right\},$$
the sequence $S$ obtained from $S_C(I^*)$ by replacing each $i\in I^*$ by $S_i$ verifies $S\in \mathcal Gr (\GR)$.
\end{theorem}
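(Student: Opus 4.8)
The plan is to derive the formula directly from Theorem \ref{gdsgr} by plugging in the values of $\gr(C_n^m,I)$ computed in Lemma \ref{SCI}. By Theorem \ref{gdsgr},
$$\gr(\GR)=\max_{I}\left\{\gr(C_n^m,I)+\sum_{i\in I}\gr(G_i)-|I|\right\},$$
where the maximum ranges over all independent sets $I$ of $C_n^m$. The empty set may be discarded: the first vertex of any nonempty legal dominating sequence is its own footprinter, so $I_S\neq\emptyset$ whenever $S\neq(\;)$, whence $\mathcal L(C_n^m,\emptyset)=\emptyset$ and its contribution is $-\infty$. I would then split the remaining maximum according to whether $I\in\mathcal I_2$ (equivalently $|I|\geq 2$) or $|I|=1$.

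For $I\in\mathcal I_2$, substituting $\gr(C_n^m,I)=n-|I|m$ from Lemma \ref{SCI} yields a contribution of $n+\sum_{i\in I}\gr(G_i)-|I|(m+1)$, which is exactly the quantity inside the maximum in the claimed formula. For a singleton $I=\{k\}$, substituting $\gr(C_n^m,\{k\})=n-2m$ gives $n-2m-1+\gr(G_k)$. The only thing left to verify is that the singletons cannot beat the best element of $\mathcal I_2$.

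The key step is this singleton comparison, and I expect it to be the sole (mild) obstacle, the rest being routine substitution. Since we assume $2(m+1)\leq n$, the two vertices $k$ and $k+m+1$ are at distance $m+1$ in $C_n$, so the pair $\{k,k+m+1\}$ is independent and lies in $\mathcal I_2$ for every $k$; its contribution equals $n+\gr(G_k)+\gr(G_{k+m+1})-2(m+1)$. Using $\gr(G_{k+m+1})\geq 1$, this is at least $n-2m-1+\gr(G_k)$, the value attached to the singleton $\{k\}$. Hence every singleton is dominated by a member of $\mathcal I_2$, so the maximum over all independent sets coincides with the maximum over $\mathcal I_2$, and the stated formula follows after factoring out the additive $n$.

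For the \emph{moreover} part, I would reuse the construction from the proof of Lemma \ref{almenos} with $I=I^*$. By the remark preceding Lemma \ref{SCI} together with Lemma \ref{SCI} itself, $S_C(I^*)\in\mathcal L(C_n^m,I^*)$ and $|S_C(I^*)|=\gr(C_n^m,I^*)=n-|I^*|m$. Replacing each $i\in I^*$ by $S_i\in\mathcal Gr(G_i)$ (and reading every remaining vertex of the sequence as a single vertex of its block $G_i$) produces $S\in\mathcal L(\GR)$ of length
$$|S|=\left(n-|I^*|m\right)+\sum_{i\in I^*}\gr(G_i)-|I^*|=n+\sum_{i\in I^*}\gr(G_i)-|I^*|(m+1).$$
By the choice of $I^*$ this equals $\gr(\GR)$, so $S$ is a \textit{Gds} of $\GR$, completing the argument.
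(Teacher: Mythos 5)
Your proposal is correct and takes essentially the same route as the paper, which states the theorem as a direct consequence of Theorem \ref{gdsgr} and Lemma \ref{SCI}. Your explicit handling of the empty set and the singleton-versus-pair comparison (using $\{k,k+m+1\}\in\mathcal I_2$ and $\gr(G_{k+m+1})\geq 1$) correctly fills in the details the paper leaves implicit.
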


Recalling that $G\circ H= \GR$ with $\mathcal R= \{G_v=H : v\in V(G)\}$, from the last result we have: 

\begin{theorem}\label{grlexcycle}
Let $n,m\in \Z_+$ and $H$ be a graph. Then,

$$\gr(C_n^m\circ H)=\left\{
	\begin{array}[h]{lll}
			\left\lfloor \frac{n}{m+1}\right\rfloor (\gr (H)-(m+1))+n & \textnormal{ if } & \gr(H)\geq m+1,\\
		&&\\
		2\gr(H)+n-(2m+2) & \textnormal{ if } & \gr(H)\leq m.
	\end{array}\right.$$
\end{theorem}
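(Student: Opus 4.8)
The plan is to derive the lexicographic formula directly from Theorem~\ref{grundyciclos} by specializing $\gr(G_i)=\gr(H)$ for all $i$. With this substitution, the objective to maximize becomes
$$
\max_{I\in\mathcal I_2}\left\{|I|\,\gr(H)-|I|(m+1)\right\}+n
=\max_{I\in\mathcal I_2}\left\{|I|\,(\gr(H)-(m+1))\right\}+n.
$$
The key observation is that the weight $\gr(H)-(m+1)$ attached to each vertex is now \emph{constant} across $I$, so the only degree of freedom left is the cardinality $|I|$. Thus the optimization over independent sets collapses into an optimization over the single integer $|I|$, ranging over the feasible sizes of independent sets in $C_n^m$ of cardinality at least two.

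First I would record the range of admissible cardinalities: since $C_n^m$ has independence number $\lfloor n/(m+1)\rfloor$, an independent set $I\in\mathcal I_2$ can have any size $k$ with $2\le k\le \lfloor n/(m+1)\rfloor$, and every such $k$ is realizable (one can place $k$ equally spaced vertices). Then the problem splits into two cases according to the sign of the coefficient $\gr(H)-(m+1)$. If $\gr(H)\ge m+1$ the coefficient is nonnegative, so the maximum is attained by taking $|I|$ as large as possible, namely $|I|=\lfloor n/(m+1)\rfloor$, giving
$$
\left\lfloor\frac{n}{m+1}\right\rfloor\bigl(\gr(H)-(m+1)\bigr)+n,
$$
which is exactly the first branch. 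If instead $\gr(H)\le m$ the coefficient $\gr(H)-(m+1)$ is strictly negative, so the maximum over $\mathcal I_2$ is attained at the \emph{smallest} admissible cardinality $|I|=2$, yielding
$$
2\bigl(\gr(H)-(m+1)\bigr)+n=2\gr(H)+n-(2m+2),
$$
which is the second branch.

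The only subtlety requiring care is the boundary case $\gr(H)=m+1$, where the coefficient vanishes and every cardinality gives the same value $n$; this is consistent with both formulas, since the first branch reduces to $n$ there, so the case split is well posed. I would also note that the second branch implicitly uses the single-vertex value from Lemma~\ref{SCI}: when $\gr(H)\le m$ one should check that restricting to $\mathcal I_2$ (rather than allowing $|I|=1$) does not lose anything, which follows because the convention $S_C(\{k\})=S_C(\{k,k+m+1\})$ in Lemma~\ref{SCI} already folds the singleton case into a two-element independent set with the same contribution.

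I do not anticipate a serious obstacle here; the whole argument is a reduction of the independent-set optimization in Theorem~\ref{grundyciclos} to a linear function of $|I|$, optimized by pushing $|I|$ to one of the two endpoints of its feasible range. The most delicate point is simply confirming that the full range $2\le|I|\le\lfloor n/(m+1)\rfloor$ is indeed achievable in $C_n^m$ under the standing assumption $2(m+1)\le n$ (which guarantees $\lfloor n/(m+1)\rfloor\ge 2$, so $\mathcal I_2\ne\emptyset$ and both branches are meaningful), and that the closed-form value of the maximum matches the stated piecewise expression in each case.
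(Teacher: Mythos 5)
Your proposal is correct and follows essentially the same route as the paper: specialize Theorem~\ref{grundyciclos} to constant weights $\gr(H)-(m+1)$, observe that the objective is linear in $|I|$, and push $|I|$ to $\alpha(C_n^m)=\left\lfloor n/(m+1)\right\rfloor$ or to $2$ according to the sign of the coefficient. The extra checks you flag (realizability of cardinalities, the boundary case $\gr(H)=m+1$, and the singleton convention) are harmless additions that the paper leaves implicit.
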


\proof
Applying Theorem \ref{grundyciclos} we have:
$$\gr(C_n^m\circ H)=\max_{I \in \mathcal I_2} \left\{|I| [\gr (H)-(m+1)]\right\} + n.$$

If $\gr(H)\geq m+1$, 

$$\max_{I \in \mathcal I_2} \left\{|I| [\gr (H)-(m+1)]\right\}= \max_{I \in \mathcal I_2} \left\{|I|\right\} [\gr (H)-(m+1)]=$$
$$=\alpha(C_n^m)[\gr (H)-(m+1)].$$
Then,
$$\gr(C_n^m\circ H)=\left\lfloor \frac{n}{m+1}\right\rfloor [\gr (H)-(m+1)]+n.$$

If $\gr(H)\leq m$, 
$$\max_{I \in \mathcal I_2} \left\{|I| [\gr (H)-(m+1)]\right\}= \min_{I \in \mathcal I_2} \left\{|I|\right\} [\gr (H)-(m+1)]=2 [\gr (H)-(m+1)].$$ 
Then, $$\gr(C_n^m\circ H)=2\gr(H)+n-(2m+2).$$
\qed

Observe that, by fixing $m=1$ we derive the known formula for the lexicographic product $C_n\circ H$ obtained in \cite{Bresar1} for $\gr(H)\geq 2$. Moreover, applying Theorem \ref{grlexcycle} with $H$ the trivial graph with one vertex, the Grundy domination number for $C_n^m$ is obtained.

\begin{corollary}
$\gr(C_n^m)=n-2m$.
\end{corollary}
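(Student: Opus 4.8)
The plan is to obtain the formula as the degenerate case of Theorem \ref{grlexcycle} in which the replacement graph $H$ is the trivial one-vertex graph, exactly as suggested in the paragraph preceding the statement. First I would recall from the remark following the definition of vertex replacement that, when $H$ consists of a single vertex with no edges, $G_{v\hookleftarrow H}$ is isomorphic to $G$; applying this at every vertex gives $C_n^m\circ H\cong C_n^m$. Hence computing $\gr(C_n^m)$ is the same as computing $\gr(C_n^m\circ H)$ for this particular $H$.

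Next I would record the elementary fact that the one-vertex graph $H$ satisfies $\gr(H)=1$: its unique legal dominating sequence is the single vertex, which serves as its own (and only) private neighbour. Under the standing assumption $2(m+1)\le n$ we have $m\ge 1$, so $\gr(H)=1\le m$, which places us squarely in the second branch of Theorem \ref{grlexcycle}. Substituting $\gr(H)=1$ there yields
$$\gr(C_n^m)=\gr(C_n^m\circ H)=2\gr(H)+n-(2m+2)=n-2m,$$
as claimed.

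The argument is essentially a single substitution, so I do not expect any genuine obstacle; the only point deserving a moment's care is verifying that $\gr(H)=1\le m$ selects the correct case, and that the first branch (which would require $\gr(H)\ge m+1$) cannot apply because $m\ge 1$. As an independent sanity check, one could instead invoke Lemma \ref{SCI} together with Remark \ref{gdsI} directly on $C_n^m$: maximizing $\gr(C_n^m,I)$ over all nonempty independent sets $I$ amounts to making $|I|$ as small as possible in the expression $n-|I|m$, and the minimal admissible cardinalities ($|I|=1$, or $|I|=2$ for $I\in\mathcal I_2$) both return the value $n-2m$, in agreement with the formula above.
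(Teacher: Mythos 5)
Your proof is correct and follows exactly the paper's route: the paper obtains the corollary by applying Theorem \ref{grlexcycle} with $H$ the trivial one-vertex graph, and your substitution into the second branch (valid since $\gr(H)=1\le m$) is precisely the intended computation. One tiny remark: the inequality $m\ge 1$ comes from the hypothesis $m\in\Z_+$ rather than from the standing assumption $2(m+1)\le n$, but this does not affect the argument.
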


In this case, fixing $m=1$ we derive the known formula for the Grundy domination number of $C_n$.

\medskip

Note that, given $\gr(H)$, the Grundy domination number of $C_n^m\circ H$ can be computed in constant time and the same result holds for $\GR$, if $G=C_n^m$ and $\gr(G_v)$ is a constant for all $G_v\in \R$. However, the computational complexity of computing $\gr(\GR)$ for a general family $\R$ is not so clear. Next, we show that 
this problem can be reduced to the Maximum Weight Independent Set problem on power of cycles.

Let $G=C^m_n$ and $\R=\{G_i:i\in [n]\}$, and 
$$M=\max_{I \in \mathcal I_2} \left\{\sum_{i\in I} \gr (G_i)- |I|(m+1)\right\}=\max_{I \in \mathcal I_2} \left\{\sum_{i\in I} [\gr (G_i)- (m+1)]\right\}.$$

From Theorem \ref{grundyciclos}, we know that if $I^*\in \mathcal I_2$ is an independent set where this maximum is attained, $S_C(I^*)$ can be constructed in linear time. Moreover, given $S_i\in \mathcal Gr(G_i)$ for all $i\in [n]$, a \textit{Gds} of $\GR$ can be constructed in linear time.

Then, we need to analyze the computational complexity of computing $M$. Defining the vector of weights $w\in \Z^n$ such that $w_i= \gr (G_i)- (m+1)$ the problem can be reduced to obtain a maximum weighted independent set of $C_n^m$ with cardinality at least two. Let us analyze the relationship between $M$ and $\alpha_w(C_n^m)$.

Let $I^*$ be an independent set of $C_n^m$ such that $w(I^*) = \alpha_w(C_n^m)$. Clearly, if $|I^*|\geq 2$, $M=\alpha_w(C_n^m)$.  

We will see that, if $|I^*|\leq 1$, $M=\alpha^2_w(C_n^m)=\max_{I \in \mathcal I_2} \left\{ w(I): |I|=2\right\}$ or, equivalently, for all $I\in \mathcal I_2$ with $|I|\geq 3$ there exists $j\in I$ such that $w(I\setminus \{j\})\geq w(I)$.

Clearly, if $|I^*|=0$ then $w_i\leq 0$ for all $i\in[n]$. Then, given $I\in \mathcal I_2$ with $|I|\geq 3$, $w(I\setminus \{j\})\geq w(I)$ for all $j\in I$. 

If $|I^*|=1$ and $I^*=\{k\}$, we have $w_k>0$ and $w_j\leq 0$ for all $j\in [k+m+1, k-m-1]_n$. Otherwise, if $w_j>0$ for some $j\in [k+m+1, k-m-1]_n$, $\{k,j\}$ is a independent set with $w_k+w_j> w(I^*)$.

Let $I\in \mathcal I_2$ with $|I|\geq 3$. Then, there exists $j\in I\cap [k+m+1, k-m-1]_n$ and $I\setminus \{j\}\in \mathcal I_2$ with $w(I\setminus \{j\})\geq w(I)$.

Since the MWIS problem is polynomial time solvable in powers of cycles \cite{clawfree}, we have proved:

\begin{theorem}\label{pesosnoneg}
Let $G=C_n^m$ and $\R=\left\{G_i:i\in [n]\right\}$,   
$w\in \Z^n$ with $w_i= \gr(G_i)-(m+1)$ for $i\in [n]$ and $I^*$ such that $w(I^*)=\alpha_w(C_n^m)$. 
Then, if $|I^*|\geq 2$, $\gr(\GR)= \alpha_w(C_n^m)+n$. Otherwise, $\gr(\GR)= \alpha^2_w(C_n^m)+n$.
Therefore, given $S_i\in \mathcal Gr(G_i)$ for all $i\in [n]$, a \textit{Gds} of $\GR$ can be obtained in polynomial time. 
\end{theorem}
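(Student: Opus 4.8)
The plan is to combine Theorem \ref{grundyciclos} with a reduction of the cardinality-constrained maximization to an ordinary (and, in a degenerate case, a pairwise) \textit{MWIS} computation. By Theorem \ref{grundyciclos}, writing $w_i=\gr(G_i)-(m+1)$, we have $\gr(\GR)=M+n$ with $M=\max_{I\in\mathcal I_2} w(I)$. Since $\mathcal I_2$ is contained in the family of all independent sets, $M\leq\alpha_w(C_n^m)$, and since $\mathcal I_2\neq\emptyset$ the quantity $\alpha^2_w(C_n^m)=\max\{w(I):I\in\mathcal I_2,\ |I|=2\}$ is well defined. If $|I^*|\geq 2$, then $I^*\in\mathcal I_2$, so $M\geq w(I^*)=\alpha_w(C_n^m)$; combined with $M\leq\alpha_w(C_n^m)$ this gives $M=\alpha_w(C_n^m)$, hence $\gr(\GR)=\alpha_w(C_n^m)+n$.

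The substantive case is $|I^*|\leq 1$, where I claim $M=\alpha^2_w(C_n^m)$; only the inequality $M\leq\alpha^2_w(C_n^m)$ requires work. It suffices to show that every $I\in\mathcal I_2$ with $|I|\geq 3$ has some $j\in I$ with $w_j\leq 0$, since then $I\setminus\{j\}\in\mathcal I_2$ satisfies $w(I\setminus\{j\})\geq w(I)$ and repeated trimming reaches a weight-$\geq w(I)$ independent pair. If $|I^*|=0$ then $\alpha_w(C_n^m)=0$, so $w_j\leq 0$ for every $j$ and any $j\in I$ works. If $|I^*|=1$, say $I^*=\{k\}$, then $w_k>0$ and, by maximality of $I^*$, every vertex $j$ for which $\{k,j\}$ is independent --- equivalently $j\in[k+m+1,k-m-1]_n$ --- satisfies $w_j\leq 0$, for otherwise $\{k,j\}\in\mathcal I_2$ would outweigh $I^*$.

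The main obstacle --- and the only point left implicit in the surrounding discussion --- is to guarantee that such a vertex $j$ actually occurs inside $I$: I must show that any independent set $I$ of $C_n^m$ with $|I|\geq 3$ meets $[k+m+1,k-m-1]_n$, i.e. contains a vertex outside $N[k]=[k-m,k+m]_n$. I would prove the stronger fact that $|I\cap N[k]|\leq 2$ for every independent set $I$ by a short counting argument: $N[k]$ is an arc of $2m+1$ consecutive vertices spanning index-distance $2m$, and for $a<b$ inside the arc $d_{C_n}(a,b)\leq b-a$, so non-adjacency in $C_n^m$ (that is $d_{C_n}>m$) forces $b-a>m$; three arc vertices would then span more than $2m$, which is impossible. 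Hence an independent set of size $\geq 3$ has a vertex $j\in[k+m+1,k-m-1]_n$ with $w_j\leq 0$, completing the trimming and yielding $M=\alpha^2_w(C_n^m)$, i.e. $\gr(\GR)=\alpha^2_w(C_n^m)+n$.

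Finally, for the algorithmic conclusion I would note that every ingredient is polynomial: the weights $w_i$ are read off from the given $\gr(G_i)$; a maximizer $I^*$ and the value $\alpha_w(C_n^m)$ are produced by the polynomial \textit{MWIS} algorithm on powers of cycles \cite{clawfree}; and $\alpha^2_w(C_n^m)$ together with an optimal pair is obtained by scanning the $O(n^2)$ independent pairs. Choosing the relevant optimal set according to the case at hand (either $I^*$ or the best pair) and invoking the explicit construction in Theorem \ref{grundyciclos} --- form $S_C$ of that set in linear time and replace each of its vertices $i$ by the given $S_i\in\mathcal Gr(G_i)$ --- produces a \textit{Gds} of $\GR$ in polynomial time.
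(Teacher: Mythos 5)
Your proof is correct and follows essentially the same route as the paper: apply Theorem \ref{grundyciclos} to get $\gr(\GR)=M+n$ with $M=\max_{I\in\mathcal I_2}w(I)$, identify $M=\alpha_w(C_n^m)$ when $|I^*|\geq 2$, and in the case $|I^*|\leq 1$ trim any independent set of size at least $3$ down to a pair without losing weight, using that all weights outside $N[k]$ (or everywhere, if $|I^*|=0$) are nonpositive. The only difference is that you explicitly justify the step the paper merely asserts --- that an independent set of size at least $3$ must meet $[k+m+1,k-m-1]_n$ --- via the counting argument that at most two pairwise non-adjacent vertices fit in the arc $N[k]$; this fills a small gap in the paper's exposition but is not a different approach.
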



\section{\textit{Gds}'s on $X$-join product with a power of a path as main factor}

Let us now consider power of paths. Observe that $P_n^m$ with $m+2>n$ is isomorphic to $K_n$.
Then, from now on, if $G=P^m_n$ we assume that $m+2\leq n$.
Recall that we assume that $V(P_n)=[n]$ and $E(P_n)=\{\{i,i+1\}: i\in [n-1]\}$.

Let us denote by $\mathcal I$, the family of non-empty independent sets of $P_n^m$ and $\tilde{\mathcal I}=\{I\in \mathcal I: m(I)\leq m+1 \text{ and } M(I)\geq n-m\}$. Similarly as in the case of power of cycles, for each $I\in \tilde{\mathcal I}$ we associate a legal dominating sequence $S_P(I)$ of $G$.

Let $I=\{i^j: j\in [p]\}\in \tilde{\mathcal I}$ with $1\leq i^j < i^{j+1}\leq n$ for all $j\in [p-1]$ and  $S_j=S(i^j,i^{j+1}-(m+1))$, for $j\in[p-1]$. We define:

$$S_P(I)=\left(\bigoplus\limits_{j=1}^{p-1} S_j\right)\oplus (i^p).$$

It is not hard to verify that $S_P(I)\in \mathcal L(P_n^m, I)$ for all $I\in \tilde{\mathcal I}$. 
Therefore, $\gr(P_n^m, I) \geq 
\max\limits_{I\in \tilde{\mathcal I}}|S_P(I)|$. 

Observe that, if $m(I)=\min\{i:\ i\in I\}$ and $M(I)=\max\{i:\ i\in I\}$, $|S_P(I)|=M(I)-m(I)+1-(|I|-1)m$.    

We have the following result:

\begin{lemma}\label{SI}
Let $I\in \tilde{\mathcal I}$. Then, $$\gr(P_n^m,I)=|S_P(I)|=M(I)-m(I)+1-(|I|-1)m.$$
Besides, if $I\in\mathcal I$, there exists $I' \in \tilde{\mathcal I}$ such that $\gr(P_n^m,I)\leq\gr(P_n^m,I')$.
\end{lemma}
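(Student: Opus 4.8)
The plan is to prove both assertions from a single decomposition of $[n]$ into the two \emph{tails} $[1,m(I))$ and $(M(I),n]$ together with the \emph{body} $[m(I),M(I)]$, and to bound $|\widehat S\cap\cdot|$ on each piece for an arbitrary $S\in\mathcal L(P_n^m,I)$, writing $I=\{i^1<\dots<i^p\}$ so that $i^1=m(I)$ and $i^p=M(I)$.

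\emph{Part 1 (equality for $I\in\tilde{\mathcal I}$).} Since $S_P(I)\in\mathcal L(P_n^m,I)$ already gives $\gr(P_n^m,I)\ge|S_P(I)|$, I only need the reverse inequality. For each $j\in[p-1]$ the interval $[i^j,i^{j+1}]$ meets $I_S=I$ exactly in its endpoints and has length $t=i^{j+1}-i^j\ge m+1$ by independence, so Lemma \ref{entrepotcaminos} yields $|\widehat S\cap[i^j,i^{j+1})|\le(i^{j+1}-i^j)-m$; summing over $j$ and adding the vertex $i^p$ gives $|\widehat S\cap[i^1,i^p]|\le M(I)-m(I)+1-(p-1)m=|S_P(I)|$. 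It then remains to show that both tails are empty, and here the hypotheses $m(I)\le m+1$ and $M(I)\ge n-m$ enter: for $v\in[1,i^1)$ one has $N[v]\subseteq N[i^1]$, and since $i^1\in I_S$ forces $i^1\in PN_S(i^1)$, placing $v$ before $i^1$ would cover $i^1$ (as $i^1-v\le m$), a contradiction, whereas placing $v$ after $i^1$ gives $PN_S(v)\subseteq N[v]\setminus N[i^1]=\emptyset$; hence $v\notin\widehat S$. The right tail is symmetric, so $|S|\le|S_P(I)|$.

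\emph{Part 2 (reduction of a general $I$).} If $\mathcal L(P_n^m,I)=\emptyset$ the inequality is trivial, so I assume it is nonempty and define $I'$ by adjoining to $I$ the vertex $1$ whenever $m(I)>m+1$ and the vertex $n$ whenever $M(I)<n-m$. One checks that $I'$ is independent (the adjoined vertices lie at distance $>m$ from $I$ and from each other, using $n\ge m+2$) and that $I'\in\tilde{\mathcal I}$; by Part 1, $\gr(P_n^m,I')=|S_P(I')|$, and a short computation shows $|S_P(I')|=[M(I)-m(I)+1-(|I|-1)m]+\max(0,m(I)-1-m)+\max(0,n-M(I)-m)$. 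Thus it suffices to bound $\gr(P_n^m,I)$ above by this quantity, i.e. to combine the body bound of Part 1 with the two \emph{tail bounds}
$$|\widehat S\cap[1,m(I))|\le\max(0,m(I)-1-m),\qquad |\widehat S\cap(M(I),n]|\le\max(0,n-M(I)-m).$$

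\emph{The main obstacle} is precisely these tail bounds in the case $m(I)>m+1$ (resp. $M(I)<n-m$), where the neighbourhood-containment argument of Part 1 collapses because the tail need no longer be empty. I would prove them by a one-sided analogue of Lemma \ref{entrepotcaminos}: partition $\widehat S\cap[1,i^1)$ according to whether each vertex is footprinted from its left or from its right, observe that the unique self-footprinter $i^1$ bounding the tail on the right footprints a whole block $[i^1-m,i^1]$, and exploit that near the path-endpoint the closed neighbourhoods are clamped and nested ($N[1]\subseteq N[2]\subseteq\cdots$), so the $m$ vertices closest to the endpoint are consumed as private neighbours without ever entering $\widehat S$ themselves. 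This produces the deficit of $m$, hence the bound $i^1-1-m$; the right tail follows by the reflection $i\mapsto n+1-i$. Granting these bounds, $|S|\le|S_P(I')|=\gr(P_n^m,I')$, which completes the argument.
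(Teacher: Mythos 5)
Your Part 1 is correct and is essentially the paper's own argument: the body $[i^1,i^p]$ is handled by summing Lemma \ref{entrepotcaminos} over consecutive pairs of $I$, and the tails are empty because for $v$ in a tail one has $i^1\in N[v]\subseteq N[i^1]$ (using $i^1\le m+1$), so $v$ before $i^1$ would destroy $i^1\in I_S$ and $v$ after $i^1$ would have $PN_S(v)=\emptyset$. Your Part 2 also has the same architecture as the paper: the paper adjoins $1$ and/or $n$ exactly as you do, and the inequalities it establishes are precisely your tail bounds $|\widehat S\cap[1,i^1)|\le i^1-1-m$ and $|\widehat S\cap(i^p,n]|\le n-i^p-m$; your way of assembling them (one application of Part 1 to $I'\in\tilde{\mathcal I}$, rather than the paper's endpoint-by-endpoint chaining) is fine, arguably cleaner.

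The gap is in the proof you sketch for the tail bounds, which you yourself flag as the main obstacle. Two problems. First, the claim that ``the $m$ vertices closest to the endpoint are consumed as private neighbours without ever entering $\widehat S$ themselves'' is false for $m\ge 2$: in $P_9^2$ with $I=\{4,9\}$ (so $i^1=4=m+2$), the sequence $S=(4,9,2)$ belongs to $\mathcal L(P_9^2,I)$ --- indeed $PN_S(2)=N[2]\setminus(N[4]\cup N[9])=\{1\}$ and $I_S=\{4,9\}$ --- yet vertex $2$, one of the $m=2$ vertices nearest the endpoint, lies in $\widehat S$. The bound $|\widehat S\cap[1,4)|\le 1$ still holds, but no fixed set of $m$ vertices is excluded from $\widehat S$; the deficit of $m$ can only come from counting, not from naming vertices. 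Second, your ``observation'' that $i^1$ footprints the whole block $[i^1-m,i^1]$ is not an observation: it is equivalent to the statement that no vertex of $\widehat S\cap[1,i^1)$ precedes $i^1$ in $S$, which is exactly the nontrivial point, and you never prove it. The paper proves it first: if some tail vertex preceded $i^1$, the lowest-order such vertex $v$ could not be covered by anything earlier (a covering vertex $u\ge i^1$ satisfies $u\le v+m<i^1+m$, hence covers $i^1$ and contradicts $i^1\in I_S$; a covering vertex below $i^1$ contradicts the minimality of $v$), so $v$ would footprint itself, contradicting $I_S\cap[1,i^1)=\emptyset$. With that in hand, the correct counting is: every $v\in\widehat S\cap[1,i^1)$ appears after $i^1$, so $PN_S(v)\subseteq N[v]\setminus N[i^1]\subseteq[1,i^1-m-1]$, and since the sets $PN_S(\cdot)$ are pairwise disjoint and nonempty, $|\widehat S\cap[1,i^1)|\le i^1-m-1$. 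Replacing your nestedness claim by this argument (and its mirror image at the other endpoint) closes the gap.
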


\proof
Let $I=\{i^j: j\in [p]\}$ with $1\leq i^j < i^{j+1}\leq n$, for all $j\in [p-1]$, be an independent set. For $j\in [p-1]$ denote $I^j=[i^j,i^{j+1})$. Moreover, $I^0=[0,i^1)$ and $I^p=[i^p,n]$.

From Lemma \ref{entrepotcaminos} we have that for any $S \in \mathcal L (P_n^m, I)$ and $j\in [p-1]$, $\left|\widehat S\cap I^j \right|\leq (i^{j+1}-i^j)-m$.

Let $I\in \tilde{\mathcal I}$, i.e. $i_1\leq m+1$ and $i_p\geq n-m$. Clearly, $\gr(P_n^m, I) \geq |S_P(I)|=M(I)-m(I)+1-(|I|-1)m$.
Then, we need to prove that, if $S\in \mathcal L(P_n^m, I)$, $|S|\leq M(I)-m(I)+1-(|I|-1)m$.

Let $S \in \mathcal L (P_n^m, I)$. Since $i^1\leq m+1$, $i^1\in N[v]\subset N[i^1]$ for all $v\in I^0$. Then $I^0\cap\widehat S=\emptyset$. Analogous reasoning implies that $I^p\cap\widehat S=\{v^p\}$.

Therefore, $|S|=\sum_{j=0}^p|\widehat S\cap I^j|\leq1+\sum_{j=1}^{p-1}((i^{j+1}-i^j)-m)=M(I)-m(I)+1-(|I|-1)m$.

For the second part, it is enough to prove that,

\begin{enumerate}
	\item if $i^1\geq m+2$,  $\gr(P_n^m,I)\leq \gr(P_n^m,I\cup\{1\})$ and 
	\item if $i^p \leq n-m-1$, $\gr(P_n^m,I)\leq \gr(P_n^m,I\cup\{n\})$. 
\end{enumerate}

%

Consider the case $i^1\geq m+2$. 

Let $S \in \mathcal L (P_n^m, I)$. Note that, if $v\in I^0\cap\widehat S$, $O_S(v)> O_S(i^1)$. Otherwise, 
the lowest order vertex in $I^0\cap\widehat S$ footprints itself, a contradiction considering that $I\cap I^0=\emptyset$.

Hence, if $v\in I^0\cap\widehat S$, then $PN_S(v)\subset I^0\setminus [i^1-m,i^1)$ and $|I^0\cap\widehat S|\leq i^1-1-m$. 
As above, for any $j\in [p-1]$, $\left|\widehat S\cap I^j \right|\leq (i^{j+1}-i^j)-m$. 

Therefore, $|S|\leq |S_P(I\cup\{1\})|$. We have proved that $\gr(P_n^m,I)\leq |S_P(I\cup\{1\})|\leq \gr(P_n^m,I\cup\{1\})$.

Let us now analyze the case $i^p\leq n-(m+1)$. With a similar reasoning followed in the previous case, we have that $|I^p\cap\widehat S|\leq n+1-i^p-m$ and $|S|\leq |S_P(I\cup\{n\})|$ for every $S\in \mathcal L(G,I)$.  Then, 
$\gr(P_n^m,I)\leq |S_P(I\cup\{n\})|\leq \gr(P_n^m,I\cup\{n\})$.
\qed

From the previous lemma and Theorem \ref{gdsgr}, we have:

\begin{theorem}\label{grundypotcaminos}
Let $G=P^m_n$ and $\R=\{G_i:i\in [n]\}$.
Then,
$$\gr(\GR)=\max_{I \in \tilde{\mathcal I}} \left\{\sum_{i\in I} (\gr (G_i)-1)+ M(I)-m(I)-|I|m\right\}+m+1.$$

Moreover, given $S_i\in \mathcal Gr(G_i)$ for all $i\in [n]$ and 
$I^*\in \tilde{\mathcal I}$ such that $$\sum_{i\in I^*} (\gr (G_i)-1)+M(I^*)-m(I^*)- |I^*|m+m+1=\gr(\GR),$$

the sequence $S$ obtained by replacing in $S_P(I^*)$ each $i\in I^*$ by $S_i$ verifies $S\in \mathcal Gr (\GR)$.
\end{theorem}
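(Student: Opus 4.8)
The plan is to derive the formula directly from Theorem~\ref{gdsgr} by specializing $G=P_n^m$ and then simplifying with the explicit value of $\gr(P_n^m,I)$ supplied by Lemma~\ref{SI}. Theorem~\ref{gdsgr} gives
$$\gr(\GR)=\max\left\{\gr(P_n^m,I)+\sum_{i\in I}\gr(G_i)-|I| : I \text{ independent set of } P_n^m\right\}.$$
First I would discard the empty independent set: since $P_n^m$ is nonempty, the first vertex of any legal dominating sequence footprints itself, so $I_S\neq\emptyset$, whence $\mathcal L(P_n^m,\emptyset)=\emptyset$ and $\gr(P_n^m,\emptyset)=-\infty$. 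Thus the maximum may be taken over $I\in\mathcal I$, and I set $F(I)=\gr(P_n^m,I)+\sum_{i\in I}\gr(G_i)-|I|$.

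The crucial reduction is to replace the range $\mathcal I$ by $\tilde{\mathcal I}$. Given any $I\in\mathcal I$, the second part of Lemma~\ref{SI} produces $I'\in\tilde{\mathcal I}$ with $\gr(P_n^m,I)\leq\gr(P_n^m,I')$; concretely $I'$ is obtained from $I$ by adjoining the vertex $1$ when $m(I)\geq m+2$ and/or the vertex $n$ when $M(I)\leq n-m-1$ (applying the two monotonicity inequalities of the lemma in succession). I would then observe that
$$F(I')-F(I)=\big[\gr(P_n^m,I')-\gr(P_n^m,I)\big]+\sum_{i\in I'\setminus I}\big(\gr(G_i)-1\big)\geq 0,$$
because the first bracket is nonnegative by Lemma~\ref{SI} and each summand is nonnegative since every nonempty graph $G_i$ satisfies $\gr(G_i)\geq 1$. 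This is the main point to get right: the passage to $\tilde{\mathcal I}$ must not decrease the \emph{weighted} objective, and it is exactly the inequality $\gr(G_i)\geq 1$ that guarantees the adjoined vertices $1,n$ cannot hurt. Together with $\tilde{\mathcal I}\subseteq\mathcal I$ this yields $\max_{I\in\mathcal I}F(I)=\max_{I\in\tilde{\mathcal I}}F(I)$.

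Next, for $I\in\tilde{\mathcal I}$ I would substitute the closed value $\gr(P_n^m,I)=M(I)-m(I)+1-(|I|-1)m$ from the first part of Lemma~\ref{SI} and rearrange:
$$F(I)=\sum_{i\in I}\big(\gr(G_i)-1\big)+M(I)-m(I)-|I|m+(m+1),$$
where the constant $m+1$ collects the $+1$ together with the $+m$ coming from $-(|I|-1)m$. Taking the maximum over $\tilde{\mathcal I}$ and pulling the constant $m+1$ out of the maximum gives the stated formula; this last step is purely algebraic.

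For the constructive ``Moreover'' part, I would invoke the construction underlying Lemma~\ref{almenos} and Theorem~\ref{gdsgr}. Since $S_P(I^*)\in\mathcal L(P_n^m,I^*)$ attains $|S_P(I^*)|=\gr(P_n^m,I^*)$ by Lemma~\ref{SI}, replacing in $S_P(I^*)$ each $i\in I^*$ by $S_i\in\mathcal Gr(G_i)$ produces a sequence $S\in\mathcal L(\GR)$ of length $\gr(P_n^m,I^*)+\sum_{i\in I^*}\gr(G_i)-|I^*|=F(I^*)$. By the choice of $I^*$ this equals $\gr(\GR)$, so $S$ has maximum length and hence $S\in\mathcal Gr(\GR)$.
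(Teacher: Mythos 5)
Your proposal is correct and follows essentially the same route as the paper: specialize Theorem~\ref{gdsgr} to $G=P_n^m$, substitute the closed formula of Lemma~\ref{SI} over $\tilde{\mathcal I}$, and check that adjoining the vertices $1$ and/or $n$ (the step behind the second part of Lemma~\ref{SI}) cannot decrease the weighted objective because $\gr(G_i)\geq 1$. The paper's own proof records only this last verification, so your write-up simply supplies the details (exclusion of the empty set, the algebraic rearrangement, and the constructive part via Lemma~\ref{almenos}) that the paper leaves implicit.
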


\begin{proof}
We only need to verify that if $I\in \mathcal I$ and $m(I)\geq m+2$ (resp. $M(I) \leq n-m-1$), defining $\tilde I=I\cup \{1\}$ (resp. $\tilde I=I\cup \{n\}$) 
$$\sum_{i\in I} (\gr (G_i)-1)+ M(I)-m(I)-|I|m \leq \sum_{i\in \tilde I} (\gr (G_i)-1)+ M(\tilde I)-m(\tilde I)-|\tilde I|m$$
\end{proof}

Recalling that $G\circ H= \GR$ with $\mathcal R= \{G_v=H : v\in V(G)\}$, we have:

\begin{theorem}\label{grlexpath}
Let $n,m\in \Z_+$ and $H$ be a graph. Then,

	$$\gr(P_n^m\circ H)=\left\{
	\begin{array}[h]{lll}
		\left\lceil \frac{n}{m+1}\right\rceil (\gr (H)-(m+1))+n+m & \textnormal{ if } & \gr(H)\geq m+1,\\
		&&\\
		2\gr(H)+n-m-2 &  \textnormal{ if } & \gr(H)\leq m.
	\end{array}\right.$$
\end{theorem}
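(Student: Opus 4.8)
The plan is to specialize Theorem~\ref{grundypotcaminos} to the lexicographic case $G_i=H$ for all $i\in[n]$, where $\gr(G_i)=\gr(H)$, so that the general formula collapses to
$$\gr(P_n^m\circ H)=\max_{I\in\tilde{\mathcal I}}\left\{|I|\,(\gr(H)-1-m)+M(I)-m(I)\right\}+m+1.$$
The entire argument then reduces to solving this combinatorial optimization over $\tilde{\mathcal I}$, and the natural split is according to the sign of the coefficient $\gr(H)-(m+1)$ of $|I|$, which is precisely the dichotomy in the statement. Throughout I would use three elementary facts about $P_n^m$: the independence number is $\alpha(P_n^m)=\lceil n/(m+1)\rceil$; every independent set $I$ satisfies $M(I)-m(I)\le n-1$; and an independent set of size $k$ forces $M(I)-m(I)\ge (k-1)(m+1)$.

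When $\gr(H)\ge m+1$ the coefficient of $|I|$ is nonnegative, so the two summands may be bounded independently by $|I|\le\alpha(P_n^m)$ and $M(I)-m(I)\le n-1$. To see both are met at once, I would exhibit an independent set $I^\ast$ with $|I^\ast|=\alpha(P_n^m)$, $m(I^\ast)=1$ and $M(I^\ast)=n$; such a set lies in $\tilde{\mathcal I}$ and exists exactly because $(\alpha(P_n^m)-1)(m+1)\le n-1$ (a one-line consequence of the ceiling formula), which leaves just enough room to place $\alpha(P_n^m)$ points at mutual distance $\ge m+1$ with endpoints $1$ and $n$. Substituting $|I^\ast|=\lceil n/(m+1)\rceil$ and $M(I^\ast)-m(I^\ast)=n-1$ and adding $m+1$ gives the first branch.

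When $\gr(H)\le m$ the coefficient $\gr(H)-1-m$ is at most $-1$, and here is the only delicate point. One is tempted simply to minimize $|I|$, but $|I|$ and the range $M(I)-m(I)$ are coupled, and a singleton (the least admissible cardinality when $n\le 2m+1$) is forced to have $M(I)-m(I)=0$, which turns out to be worse than taking $I=\{1,n\}$. The clean route is to show the maximum is attained at $I=\{1,n\}$, of value $2(\gr(H)-1-m)+(n-1)$. For the upper bound I would note that for any $I$ with $|I|=k\ge 2$ we have $(k-2)(\gr(H)-1-m)\le 0$ and $M(I)-m(I)\le n-1$, whence $|I|(\gr(H)-1-m)+M(I)-m(I)\le 2(\gr(H)-1-m)+(n-1)$; the remaining case $k=1$ follows from the elementary inequality $\gr(H)+n-m-2>0$, which holds since $\gr(H)\ge 1$ and $n\ge m+2$. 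As $\{1,n\}$ is independent in $P_n^m$ (because $n\ge m+2$) and belongs to $\tilde{\mathcal I}$, it realizes the maximum, and adding $m+1$ yields $2\gr(H)+n-m-2$, the second branch.

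The main obstacle is thus concentrated in this last case: the objective is not monotone in $|I|$, so one must rule out both larger and smaller cardinalities, the key observation being that the nonpositive per-vertex penalty $(k-2)(\gr(H)-1-m)$ can never be offset by the range term, which is bounded by $n-1$. Everything else is a direct substitution into Theorem~\ref{grundypotcaminos}, and the structure parallels the proof of Theorem~\ref{grlexcycle} above, with $\lfloor n/(m+1)\rfloor$ and the closing $+n$ replaced by $\lceil n/(m+1)\rceil$ and $+n+m$, the discrepancy coming from the endpoint terms $M(I)$ and $m(I)$ that are present for paths but absent for cycles.
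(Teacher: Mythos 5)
Your proposal is correct and takes essentially the same route as the paper: specialize Theorem~\ref{grundypotcaminos} to $G_i=H$ for all $i$, then optimize $|I|(\gr(H)-(m+1))+M(I)-m(I)$ over $\tilde{\mathcal I}$ according to the sign of the coefficient of $|I|$, realizing the optimum by an independent set containing $1$ and $n$ (of maximum size when $\gr(H)\geq m+1$, and equal to $\{1,n\}$ when $\gr(H)\leq m$). The only difference is organizational: the paper first normalizes every $I$ to one with $m(I)=1$ and $M(I)=n$ and then extremizes $|I|$, while you bound the two terms separately; your explicit handling of singletons (via $\gr(H)+n-m-2>0$) and of the existence of a maximum independent set with endpoints $1$ and $n$ supplies details the paper leaves as ``easy to see.''
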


\proof
From Theorem \ref{grundypotcaminos} we have that
$$\gr(P_n^m\circ H)=\max_{I \in \tilde{\mathcal I}} \left\{|I|[\gr (H)- (1+m)]+ M(I)-m(I)\right\}+(m+1).$$
First, in order to obtain $\gr(P_n^m\circ H)$, we note that it is enough to consider independent sets $I$ such that $M(I)=n$ and $m(I)=1$. Indeed, if $\ell(I)=|I|[\gr (H)- (1+m)]+ M(I)-m(I)$ and $\tilde I= (I\setminus \{M(I),m(I)\})\cup \{1,n\}$, it is easy to see that $\ell(I)\leq \ell(\tilde I)$. Then, 
$$\gr(P_n^m\circ H)=\max_{I \in \tilde{\mathcal I}} \left\{|I|[\gr (H)- (m+1)]\right\}+ m+ n.$$

Therefore, if $\gr(H)\geq m+1$, 
$$\max_{I \in \tilde{\mathcal I}} \left\{|I|[\gr (H)- (1+m)]\right\}=\max \left\{|I|: I \in \tilde{\mathcal I}\right\} [\gr (H)- (1+m)]=$$
$$= \alpha(P_n^m) [\gr (H)- (1+m)].$$
Since $\alpha(P_n^m)=\left\lceil \frac{n}{m+1}\right\rceil$, 
$$\gr(P_n^m\circ H)=\left\lceil \frac{n}{m+1}\right\rceil (\gr (H)-(m+1))+n+m.$$

Finally, if $\gr(H)\leq m$, 
$$\max_{I \in \tilde{\mathcal I}} \left\{|I|[\gr (H)- (1+m)]\right\}= \min \left\{|I|:I \in \tilde{\mathcal I}\right\} [\gr (H)- (1+m)].$$
Then, $$\gr(P_n^m\circ H)=2(\gr(H)-(m+1))+n+m=2\gr(H)+n-m-2.$$
\qed

Observe that, by fixing $m=1$ we derive the known formula for the lexicographic product $P_n\circ H$ obtained in \cite{Bresar1} for $\gr(H)\geq 2$. Moreover, applying Theorem \ref{grlexpath} with $H$ the trivial graph with one vertex, the Grundy domination number for $P_n^m$ is obtained.

\begin{corollary}
$\gr(P_n^m)=n-m$.
\end{corollary}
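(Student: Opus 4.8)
The plan is to apply Theorem \ref{grlexpath} with $H$ taken to be the trivial graph on a single vertex, exploiting the fact that the lexicographic product with such an $H$ recovers the original graph. Concretely, if $H$ is the one-vertex graph with no edges, then replacing every vertex $v$ of $P_n^m$ by $H$ changes nothing, so $P_n^m\circ H$ is isomorphic to $P_n^m$; this is exactly the observation recorded for $G_{v\hookleftarrow H}$ in the preliminaries. Hence $\gr(P_n^m)=\gr(P_n^m\circ H)$, and it will suffice to evaluate the right-hand side of Theorem \ref{grlexpath} for this choice of $H$.

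The next step is to compute $\gr(H)$ for the trivial graph. A legal dominating sequence of a one-vertex graph consists of that single vertex, which serves as its own (nonempty) private neighbour and dominates the whole graph; no longer legal dominating sequence can exist, so $\gr(H)=1$.

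It then remains only to select the correct branch of the case distinction in Theorem \ref{grlexpath}. Since we assume $m\in\Z_+$, we have $m\geq 1$, and therefore $\gr(H)=1\leq m$, which places us in the second case. Substituting $\gr(H)=1$ into the formula $2\gr(H)+n-m-2$ yields $2+n-m-2=n-m$, as claimed.

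I expect no genuine obstacle here: the argument is a direct specialization of the already-established Theorem \ref{grlexpath}. The only points requiring a moment's care are verifying that the lexicographic product with the one-vertex graph indeed reproduces $P_n^m$ (rather than altering its structure) and confirming that the hypothesis $m\geq 1$ forces the branch $\gr(H)\leq m$ rather than $\gr(H)\geq m+1$.
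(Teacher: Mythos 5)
Your proof is correct and takes exactly the paper's approach: the paper derives this corollary by applying Theorem \ref{grlexpath} with $H$ the trivial one-vertex graph, which is precisely your argument. Your branch selection ($\gr(H)=1\leq m$ since $m\in\Z_+$) and the arithmetic $2\gr(H)+n-m-2=n-m$ are both right.
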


In this case, fixing $m=1$ we derive the known formula for the Grundy domination number of $P_n$.

Note that, the Grundy domination number of $P_n^m\hookleftarrow\mathcal R$ can be computed in constant time if $\gr(G_v)$ is a given constant, for all $G_v\in \R$. Let us analyze the computational complexity for general families $\R$.
From Theorem \ref{grundypotcaminos}, it depends on the computational complexity of computing 
$$M= \max_{I \in \tilde{\mathcal I}} \left\{\sum_{v\in I} \gr (G_v)-(|I|-1) (1+m)+ M(I)-m(I)\right\}.$$
Given $I \in \tilde{\mathcal I}$, 
we define $I_1= I\cap [1,m+1]$, $I_2=I\cap [m+2,n-m-1]$ and $I_3=I\cap [n-m,n]$. Observe that, since $I \in \tilde{\mathcal I}$, 
$$m(I)=\sum_{i\in I_1} i \text{ \;\; and \;\; } M(I)=\sum_{i\in I_3}i.$$ 

Then,if $|I|\geq 2$ we have

$$\sum_{i\in I} \gr (G_i)-(|I|-1) (1+m)+ M(I)-m(I)= $$
$$=\sum_{i\in I_1}[\gr (G_i)-i]+ \sum_{i\in I_2} [\gr (G_i)- (1+m)]+ \sum_{i\in I_3} [\gr (G_i)- (1+m)+i].$$

Observe that, if $n\geq 2m+3$, $|I|\geq 2$ for all $I\in \tilde I$. However, if $n\leq 2m+2$, $n-m\leq m+2$. 
Then, for any $j\in [n-m,m+1]$, 
$\{j\}\in\tilde{\mathcal I}$. Moreover, if $I\in\tilde{\mathcal I}$ and $|I|=1$, $I\subset (n-m-1,m+2)$.

Let us first analyze the case $n\geq 2m+3$. 

Since $|I|\geq 2$ for all $I\in \tilde I$, we have
$$M=\max_{I\in \tilde{\mathcal I}}\left\{\sum_{i\in I_1}[\gr (G_i)-i]+ \sum_{i\in I_2} [\gr (G_i)- (1+m)]+ \sum_{i\in I_3} [\gr (G_i)-(1+m)+i] 
\right\}.$$

Let $w\in\Z^n$ a weight vector defined as follows:
\begin{equation} \label{pesos}
w_i=\left\{
\begin{array}{lll}
\gr (G_i)-i& \text{ if } & i\in [1,1+m] \\
\gr (G_i)-(1+m)& \text{ if } & i\in [m+2,n-m-1] \\
\gr (G_i)-(1+m)+i& \text{ if } & i\in [n-m,n] 
\end{array}
\right.
\end{equation}

We will see that $M=\alpha_w(P_n^m)$. We only need to prove that there always exists $I^*\in \tilde I$ such that $w(I^*)=\alpha_w(P_n^m)$.

Observe that $w_1\geq 0$ and $w_n > 0$. Then, if $I^*$ is an independent set of $P_n^m$ such that $w(I^*)=\alpha_w(P_n^m)$, $M(I^*)\geq n-m$. Moreover, if $m(I^*)\geq m+2$ then $w_j\leq 0$ for all $j\in [m(I^*)-(m+1)]$. Hence, $w_1=0$ and $w(I^*\cup \{1\})=w(I^*)$. Clearly, $I^*\cup \{1\}\in \tilde{\mathcal I}$ and then,  $M=\alpha_w(P_n^m)$.

Since the MWIS problem is linear time solvable in powers of paths \cite{cordales}, we have proved:

\begin{lemma}
Let $G=P_n^m$ with $n>2m+2$, $\R=\left\{G_i:i\in [n]\right\}$. Let $w\in \Z^n$ defined as in (\ref{pesos}).
Then, $\gr(\GR)=\alpha_w(P_n^m)$. Moreover, given $S_i\in \mathcal Gr(G_i)$ for all $i\in [n]$, a \textit{Gds} of $\GR$ can be obtained in linear time.
\end{lemma}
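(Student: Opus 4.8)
The plan is to read off the formula for $\gr(\GR)$ from Theorem \ref{grundypotcaminos}, rewrite it as the quantity $M$ displayed just before the lemma, and then show that under the hypothesis $n>2m+2$ this maximum coincides with the (unconstrained) maximum weight independent set $\alpha_w(P_n^m)$ for the weights in (\ref{pesos}). One inequality is immediate; the reverse one, which is the real content, requires showing that an optimal independent set for $w$ can always be taken inside $\tilde{\mathcal I}$.

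First I would record that Theorem \ref{grundypotcaminos} rewrites as $\gr(\GR)=M$: expanding $\sum_{i\in I}(\gr(G_i)-1)-|I|m+m+1=\sum_{i\in I}\gr(G_i)-(|I|-1)(m+1)$ turns the objective there into the one defining $M$. Next I would exploit that in $P_n^m$ the vertex sets $[1,m+1]$ and $[n-m,n]$ are cliques, so any independent set meets each in at most one vertex; as $n>2m+2$ these cliques are disjoint, and for $I\in\tilde{\mathcal I}$ one has $m(I)\in[1,m+1]$ and $M(I)\in[n-m,n]$, forcing $|I|\ge 2$ together with $I_1=\{m(I)\}$ and $I_3=\{M(I)\}$. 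With this the three-part decomposition preceding the lemma shows that the objective of $M$ at $I$ equals $w(I)$ for every $I\in\tilde{\mathcal I}$, so $M=\max_{I\in\tilde{\mathcal I}}w(I)\le\alpha_w(P_n^m)$.

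For the reverse inequality $\alpha_w(P_n^m)\le M$ I would show a maximizer lies in $\tilde{\mathcal I}$. Let $I^*$ attain $\alpha_w(P_n^m)$. Since $w_n=\gr(G_n)+n-(m+1)>0$, if $M(I^*)<n-m$ then $n$ is nonadjacent to all of $I^*$ and $I^*\cup\{n\}$ would be independent of strictly larger weight; hence $M(I^*)\ge n-m$. If already $m(I^*)\le m+1$ then $I^*\in\tilde{\mathcal I}$. Otherwise $m(I^*)\ge m+2$, and a sign analysis gives $w_j\le 0$ for all $j\in[1,m(I^*)-(m+1)]$ (otherwise adding such a $j$, nonadjacent to $I^*$, would increase the weight); in particular $w_1\le 0$, while $w_1=\gr(G_1)-1\ge 0$, so $w_1=0$ and $I^*\cup\{1\}$ is an independent set of equal weight lying in $\tilde{\mathcal I}$. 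In either case $\alpha_w(P_n^m)=w(I^*)\le M$, and combined with the previous step this yields $\gr(\GR)=M=\alpha_w(P_n^m)$.

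For the algorithmic claim I would solve the instance $(P_n^m,w)$ of MWIS in linear time via \cite{cordales}, then convert the returned optimum into a weight-equal member $I^*$ of $\tilde{\mathcal I}$ in linear time using the adjustment above, and finally invoke the constructive part of Theorem \ref{grundypotcaminos} to assemble a \textit{Gds} of $\GR$ by substituting each $S_i$ into $S_P(I^*)$. The main obstacle is precisely the reverse inequality: one must check that the weights in (\ref{pesos}) are arranged so that extending a maximizer by the boundary vertices $1$ and $n$ never lowers the weight, which is exactly what the signs $w_1\ge 0$, $w_n>0$ and the non-positivity of the left-end weights guarantee.
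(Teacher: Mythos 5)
Your proposal is correct and follows essentially the same route as the paper: rewrite Theorem \ref{grundypotcaminos} as the quantity $M$, observe that the objective equals $w(I)$ on $\tilde{\mathcal I}$ via the decomposition $I_1,I_2,I_3$, and then use the signs $w_1\geq 0$, $w_n>0$ (and non-positivity of the left-end weights when $m(I^*)\geq m+2$) to move a maximizer of $w$ into $\tilde{\mathcal I}$, finishing with the linear-time MWIS algorithm of \cite{cordales}. In fact your write-up is somewhat more explicit than the paper's (which leaves both inequalities and the clique-disjointness observation implicit), but the ideas coincide.
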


Let us analyze the case $n\leq 2m+2$. Since $\alpha(P_n^m)=2$, $\gr(\GR)$ can be computed in $O(n^2)$ exploring all the elements in $\tilde I$. However, we will see that, also in this case, computing $M$ can be reduced to the $MWIS$ in $P_n^m$ and then, it 
can be solved in linear time.

Recall that $j\in [n-m,m+1]$ if and only if  $\{j\}\in \tilde I$. Moreover, if $I\in \tilde I$ and $|I|=2$, $|I\cap [1,n-m-1]|=|I\cap [m+2,n]|=1$.

Then, given $I \in \tilde{\mathcal I}$ if $I_1= I\cap [1,n-m-1]$, $I_2=I\cap [n-m,m+1]$, and $I_3=I\cap [m+2,n]$ we have:


$$\sum_{i\in I} \gr (G_i)-(|I|-1) (1+m)+ M(I)-m(I)=$$
$$=\sum_{i\in I_1} \gr (G_i)+\sum_{i\in I_2} \gr (G_i)+\sum_{i\in I_3} \gr (G_i)-(|I|-1) (1+m)+ \sum_{i=n-m}^n i-\sum_{i=1}^{m+1} i=$$
$$=\sum_{i\in I_1} [\gr (G_i)-i]+\sum_{i\in I_2} \gr (G_i)+\sum_{i\in I_3} [\gr (G_i)+i]-(|I|-1)(1+m).$$

Then, using a similar reasoning as before, $M=\alpha_w(P_n^m)$ with $w\in\Z^n$ a weight vector defined as follows:

\begin{equation} \label{pesos1}
w_i=\left\{
\begin{array}{lll}
\gr (G_i)-i& \text{ if } & i\in [1,n-m-1] \\
\gr (G_i)& \text{ if } & i\in (n-m-1,m+2) \\
\gr (G_i)+i-(1+m)& \text{ if } & i\in [m+2,n] 
\end{array}
\right.
\end{equation}



\begin{theorem}
Let $G=P_n^m$ with $n\leq 2m+2$, $\R=\left\{G_i:i\in [n]\right\}$. Let $w\in \Z^n$ defined as in (\ref{pesos1}).
Then, $\gr(\GR)=\alpha_w(P_n^m)$. Moreover, given $S_i\in \mathcal Gr(G_i)$ for all $i\in [n]$, a \textit{Gds} of $\GR$ can be obtained in linear time.
\end{theorem}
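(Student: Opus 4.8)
The plan is to combine the formula of Theorem~\ref{grundypotcaminos} with the degenerate structure of $P_n^m$ in the range $n\leq 2m+2$, turning the optimization over $\tilde{\mathcal I}$ into an (unconstrained) Maximum Weight Independent Set computation. As already observed in the discussion preceding the statement, Theorem~\ref{grundypotcaminos} yields $\gr(\GR)=M$, where $M=\max_{I\in\tilde{\mathcal I}}\{\sum_{i\in I}\gr(G_i)-(|I|-1)(1+m)+M(I)-m(I)\}$; writing $\mathrm{obj}(I)$ for the maximized expression, it therefore suffices to prove $M=\alpha_w(P_n^m)$ for the weight vector $w$ of (\ref{pesos1}), and then to justify the complexity bound.

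First I would record the structural facts that make $n\leq 2m+2$ special. Since vertices at distance at most $m$ are adjacent in $P_n^m$, a hypothetical independent triple $i<j<k$ would force $k\geq i+2(m+1)\geq 2m+3>n$, a contradiction; hence $\alpha(P_n^m)=2$ and every independent set is empty, a singleton, or a pair. For an independent pair $\{i,j\}$ with $i<j$ one has $j\geq i+(m+1)\geq m+2$ and $i\leq j-(m+1)\leq n-m-1$, and since $n\leq 2m+2$ these give $i\leq m+1$ and $j\geq n-m$; thus \emph{every} independent pair already lies in $\tilde{\mathcal I}$, with its minimum in the first block $[1,n-m-1]$ and its maximum in the third block $[m+2,n]$ of (\ref{pesos1}), while a singleton $\{j\}$ lies in $\tilde{\mathcal I}$ exactly when $j$ belongs to the middle block $(n-m-1,m+2)$.

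The second step is the identity $w(I)=\mathrm{obj}(I)$ for every $I\in\tilde{\mathcal I}$, which is precisely the block computation carried out just before the statement: for $I\in\tilde{\mathcal I}$ the first- and third-block parts are singletons realizing $m(I)$ and $M(I)$, so the $\pm i$ terms of (\ref{pesos1}) reproduce $M(I)-m(I)$, and the number of third-block elements equals $|I|-1$, so the $-(1+m)$ terms reproduce $-(|I|-1)(1+m)$. The third and main step is to show that the unconstrained optimum $\alpha_w$ is attained inside $\tilde{\mathcal I}$. Here I would use that $w_1=\gr(G_1)-1\geq 0$ and $w_n=\gr(G_n)+n-(1+m)\geq n-m>0$. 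Any independent set outside $\tilde{\mathcal I}$ is either $\emptyset$ or a singleton $\{j\}$ with $j\in[1,n-m-1]$ or $j\in[m+2,n]$; in each case I replace it by a pair without decreasing weight: if $j\leq n-m-1$ then $\{j,n\}$ is independent and $w(\{j,n\})\geq w(\{j\})$ since $w_n>0$; if $j\geq m+2$ then $\{1,j\}$ is independent and $w(\{1,j\})\geq w(\{j\})$ since $w_1\geq 0$; and $\emptyset$ is dominated by $\{1,n\}$. Since every pair lies in $\tilde{\mathcal I}$, this gives $\alpha_w=\max_{I\in\tilde{\mathcal I}}w(I)=M$, hence $\gr(\GR)=\alpha_w(P_n^m)$.

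For the algorithmic claim, the weights (\ref{pesos1}) are computed in linear time from the given values $\gr(G_i)$, and MWIS on a power of a path is solvable in linear time (\cite{cordales}), producing a maximizer $I^*$; if $I^*\notin\tilde{\mathcal I}$ the constant-size extension above moves it into $\tilde{\mathcal I}$ in $O(n)$ time, and by Theorem~\ref{grundypotcaminos} replacing in $S_P(I^*)$ each $i\in I^*$ by $S_i\in\mathcal Gr(G_i)$ yields a \textit{Gds} of $\GR$, again in linear time. The only genuinely delicate point is the third step: verifying that pushing a suboptimal singleton out to an endpoint never loses weight, which is exactly where the signs $w_1\geq 0$, $w_n>0$ and the hypothesis $n\leq 2m+2$ (guaranteeing that every pair is admissible) are used.
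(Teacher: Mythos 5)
Your proposal is correct and follows essentially the same route as the paper: the block identity $w(I)=\sum_{i\in I}\gr(G_i)-(|I|-1)(1+m)+M(I)-m(I)$ for $I\in\tilde{\mathcal I}$, followed by the observation that an unconstrained MWIS optimum can be assumed to lie in $\tilde{\mathcal I}$ because $w_1\geq 0$ and $w_n>0$, which is exactly the ``similar reasoning as before'' that the paper invokes without spelling out. Your explicit case analysis (empty set, singleton in an end block, and the fact that $\alpha(P_n^m)=2$ forces every pair into $\tilde{\mathcal I}$) is a faithful and complete filling-in of that omitted step, so there is nothing to object to.
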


%
%
%
%
%
%
%
%
%
%
%
%
%
%
%
%
%
%
%
%


\section{\textit{Gds}'s on $X$-join product with a split graph as main factor}

In this section we work with split graphs $G=(I^*\cup K, E)$ where $K$ is a clique of $G$ and $I$, an independent set with $|I^*|=\alpha(G)$.

Given a split graph $G$ we define the parameter $n(G)=1$ if there exist $v,w\in K$ such that $(N(v)\cap N(w))\cap I^*=\emptyset$ and $n(G)=0$, otherwise. Theorem 2.6 in \cite{martin} proves that $\gr(G)=\alpha(G)+n(G)$ and  characterizes all \textit{Gds}'s of $G$, which can be obtained in polynomial time. 
 
We analyze the value of $\gr(G,I)$, for all independent set $I$ of $G$. 

First, for each independent set $I$ of $G$ we define a sequence $S(I)\in \mathcal L(G,I)$. Observe that, for all $I$ we have $|K\cap I|\leq 1|$. Then:
\begin{enumerate}
	\item If $I\cap K=\emptyset$, $S(I)= (I^*)$ if $n(G)=0$ and, otherwise, $S(I)= (N(u)\cap I^*)\oplus (u) \oplus (I^*\setminus N(u))$ for any $u\in K$ such that there exists $v\in K$ with $N(u)\cap N(v)\cap I^*=\emptyset$.
	\item If $I\cap K=\{u\}$, $S(I)=(u)\oplus (I\setminus N(u))$.
\end{enumerate}

Clearly, $S(I)\in \mathcal L(G,I)$ for all non empty independent set $I$ of $G$.

Now, we can prove:
\begin{lemma}
Let $G=(I^*\cup K, E)$ be a split graph and $I$, an independent set of $G$. Then, $\gr(G,I)=|S(I)|$. That is: 
\begin{enumerate}
	\item If $K\cap I=\emptyset$, $\gr(G,I)=|I^*|+n(G)$.
	\item If $K\cap I=\{u\}$, $\gr(G,I)= |I^*\setminus N(u)|+1$.
\end{enumerate}
\end{lemma}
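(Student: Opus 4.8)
The plan is to prove the two inequalities $\gr(G,I)\ge |S(I)|$ and $\gr(G,I)\le |S(I)|$ in each of the two cases. The lower bound will follow as soon as the explicitly constructed sequence $S(I)$ is shown to lie in $\mathcal L(G,I)$, since its length is then a witness for $\gr(G,I)$. For the upper bound I would argue differently in the two cases: when $K\cap I=\emptyset$ the estimate is inherited from the global Grundy domination number, whereas when $K\cap I=\{u\}$ it is extracted from the counting inequality of Remark~\ref{cotalegal} applied at the position of $u$.

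For the lower bound I would check that $S(I)$ is legal dominating and that its footprinter set satisfies $I_{S(I)}=I$. Two structural facts make the verification go through. First, since $|I^*|=\alpha(G)$, every vertex of $K$ has a neighbour in $I^*$ (otherwise $I^*\cup\{w\}$ would be independent for some $w\in K$), so $I^*$ is a dominating set of $G$; consequently each sequence $S(I)$ above dominates $G$, and the clique vertices that occur are already dominated by the $I^*$-vertices placed in front of them, hence do not footprint themselves. Second, in the subcase $K\cap I=\emptyset$ with $n(G)=1$ the clique vertex $u$ is chosen together with a partner $v\in K$ with $N(u)\cap N(v)\cap I^*=\emptyset$, and this is exactly what forces $PN_{S(I)}(u)\ne\emptyset$: none of the vertices of $N(u)\cap I^*$ placed before $u$ is adjacent to $v$, so $v\in N[u]$ survives into $PN_{S(I)}(u)$. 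A short footprinter bookkeeping then shows that every vertex of $I^*$ footprints itself while $u$ does not, giving $I_{S(I)}$ as prescribed; the $n(G)=0$ subcase and Case~2 are the same argument with the clique vertex either absent or placed first.

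For the upper bound, Case~1 is immediate: $\mathcal L(G,I)\subseteq\mathcal L(G)$ yields $\gr(G,I)\le\gr(G)=\alpha(G)+n(G)=|I^*|+n(G)$ by Theorem~2.6 of \cite{martin}. Case~2 is the heart of the matter. Take $S=(v_1,\dots,v_k)\in\mathcal L(G,I)$ and let $r=O_S(u)$. Because $u\in I_S$, no $v_j$ with $j<r$ dominates $u$; since the vertices dominating $u$ are exactly $N[u]=K\cup(N(u)\cap I^*)$, this forces $v_1,\dots,v_{r-1}\in I^*\setminus N(u)$. Therefore $\bigcup_{i=1}^{r}N[v_i]$ contains the disjoint union $K\cup(N(u)\cap I^*)\cup\{v_1,\dots,v_{r-1}\}$, whose size is at least $|K|+|N(u)\cap I^*|+(r-1)$. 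Substituting into Remark~\ref{cotalegal} gives $|S|\le |V(G)|-\bigl(|K|+|N(u)\cap I^*|+r-1\bigr)+r=|I^*\setminus N(u)|+1$, the desired bound. The step I expect to be the real obstacle is the lower-bound bookkeeping in the $n(G)=1$ construction: one must keep exact track of which vertices are already dominated at the moment the clique vertex $u$ is inserted, so as to certify simultaneously that $PN_{S(I)}(u)\ne\emptyset$ and that $u\notin I_{S(I)}$, and it is precisely the ordering of the three blocks of $S(I)$ that makes both conditions hold.
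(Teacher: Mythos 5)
Your two upper-bound arguments are correct, and in Case~2 your route is genuinely different from (and cleaner than) the paper's: the paper first replaces every clique vertex of $\widehat S$ other than the first one by a private neighbour in $I^*$, so as to reduce to sequences meeting $K$ in a single vertex, and only then excludes the vertices of $I^*\cap N(u)$; you instead get the bound in one stroke from Remark~\ref{cotalegal} applied at $r=O_S(u)$, using that $u\in I_S$ forces $v_1,\dots,v_{r-1}\in I^*\setminus N(u)$ and that $N[u]=K\cup(N(u)\cap I^*)$. Your Case~1 upper bound, via $\mathcal L(G,I)\subseteq\mathcal L(G)$ and $\gr(G)=\alpha(G)+n(G)$ from Theorem~2.6 of \cite{martin}, is also valid and shorter than the paper's direct sub-case analysis.

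The gap is in the lower bound, exactly where you flagged the ``real obstacle'', and it cannot be closed. Your own bookkeeping proves that \emph{every} vertex of $I^*$ footprints itself in $S(I)$ (Case~1), respectively every vertex of $I^*\setminus N(u)$ does (Case~2); hence $I_{S(I)}=I^*$ in Case~1 and $I_{S(I)}=\{u\}\cup(I^*\setminus N(u))$ in Case~2, \emph{independently of} $I$. So $S(I)\in\mathcal L(G,I)$ --- and with it ``$I_{S(I)}$ as prescribed'' --- holds only when $I$ equals the corresponding maximal set, and the inequality $\gr(G,I)\geq|S(I)|$ is unproved otherwise. No refinement of the bookkeeping can repair this, because the statement itself is false for non-maximal $I$: take $K=\{a,b\}$, $I^*=\{x,y,z\}$, $E=\{\{a,b\},\{a,x\},\{b,y\},\{b,z\}\}$ and $I=\{a\}$. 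Any $S\in\mathcal L(G,\{a\})$ excludes $x$, $y$, $z$ (each of these vertices, if placed in $S$ before its unique clique neighbour, would footprint itself, contradicting $I_S=\{a\}$; if placed after it, its private neighborhood is empty), so $\widehat S\subseteq\{a,b\}$ and $\gr(G,\{a\})=2$, attained by $(a,b)$, while $|I^*\setminus N(a)|+1=3$. In fairness, the paper's proof has the identical hole --- it simply asserts ``$S(I)\in\mathcal L(G,I)$'' before the lemma and then only proves the upper bound --- so you reproduced the paper's oversight rather than introduced a new one. The lemma is correct, and your argument complete, once $I$ is restricted to the maximal sets $I=I^*$ and $I=\{u\}\cup(I^*\setminus N(u))$ for $u\in K$, which is all that Theorem~\ref{pseudo-splitS} actually uses, since $\gr(G_v)\geq 1$ makes the maximum in Theorem~\ref{gdsgr} attained at such sets.
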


\begin{proof}
We only need to prove that, for any $S\in \mathcal L(G,I)$, $|S|\leq |S(I)|$.

Let $S\in \mathcal L(G,I)$. Clearly, if $\hat S\cap K=\emptyset$, $K\cap I=\emptyset$ holds and $|S|\leq |I^*|\leq |I^*|+n(G)$. 
Then, assume that $\hat S\cap K\neq \emptyset$. Let $u$ be the vertex in $K$ of minimum order in $\hat S$. Observe that, for all $v\in \hat S\cap K$, $v\neq u$, there exists $i_v\in PN_S(v)\cap I^*$. Clearly, $i_v\notin \hat S$. Then, $S_{v\hookleftarrow i_v}\in  \mathcal L (G, I\cap\{i_v\})$. Then, we can assume that $S\in \mathcal L(G,I)$ such that $\hat S\cap K=\{u\}$.
\begin{enumerate}
	\item Let $I$ such that $K\cap I=\emptyset$.
If there exists $w\in PN_S(u)\cap I^*$, $w\notin \hat S$. Then, $|S|=|\hat S\cap I^*|+1\leq (|I^*|-1)+1= |I^*|\leq |I^*|+n(G)$.

Now, assume that $PN_S(u)\cap I^*=\emptyset$. Then, for all $w\in N(u)\cap I^*$ , $w\in \hat S$ and $O_S(w)<O_S(u)$. Observe that if $n(G)=0$, $\bigcup\limits_{w\in N(u)\cap I^*}N(w)=K$ and $PN_S(u)=\emptyset$, a contradiction. Then, $n(G)=1$ and we have 
$|S|=|\hat S\cap I^*|+1\leq |I^*|+1= |I^*|+n(G)$.
\item
Let $I$ such that $K\cap I=\{u\}$. Since $u\in I_S=I$, for all $w\in I^*\cap N(u)$, $w\notin \hat S$. Then, $|S|\leq |I^*\setminus N(u)|+1$.
\end{enumerate}
\end{proof}

Then, by Theorem \ref{gdsI}, we have: 

\begin{theorem} \label{pseudo-splitS}
Let $G=(I^*\cup K, E)$ be a split graph and $\R=\left\{G_v:v\in I^*\cup K\right\}$. Then
$$\gr(\GR)= \max \left\{\sum_{v\in I^*} \gr(G_v)+n(G), \;\; \max_{v\in K}\left\{\gr(G_v)+\sum_{w\in I^*\setminus N(v)} \gr(G_w)\right\}\right\}.$$
Moreover, given $S_v\in \mathcal Gr(G_v)$ for all $v\in I^*\cup K$, a \textit{Gds} of $\GR$ can be obtained in polynomial time.
\end{theorem}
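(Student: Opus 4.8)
The plan is to apply Theorem \ref{gdsgr} directly and then optimise the resulting expression over all independent sets of the split graph $G$, substituting the values of $\gr(G,I)$ furnished by the preceding lemma. By Theorem \ref{gdsgr},
$$\gr(\GR)=\max\left\{\gr(G,I)+\sum_{v\in I}\gr(G_v)-|I| : I \text{ independent set of } G\right\},$$
and since the first vertex of any legal dominating sequence footprints itself, only non-empty $I$ contribute (the empty set gives $\mathcal L(G,\emptyset)=\emptyset$, hence $-\infty$). The single fact I would use throughout is that $\gr(G_v)\geq 1$ for every non-empty graph, so each quantity $\gr(G_v)-1$ is non-negative.

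First I would split the independent sets of $G$ into the two types forced by its split structure, using that $|I\cap K|\leq 1$: those with $I\cap K=\emptyset$ (so $I\subseteq I^*$), and those with $I\cap K=\{u\}$ for a single $u\in K$ (so $I=\{u\}\cup I'$ with $I'\subseteq I^*\setminus N(u)$). For the first type the preceding lemma gives $\gr(G,I)=|I^*|+n(G)$, independent of $I$, so the objective becomes $|I^*|+n(G)+\sum_{v\in I}(\gr(G_v)-1)$; as each summand is non-negative, the maximum is attained at $I=I^*$ (which is non-empty because $\alpha(G)\geq 1$), yielding $\sum_{v\in I^*}\gr(G_v)+n(G)$, the first term of the formula. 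For the second type, with $u\in K$ fixed, the lemma gives $\gr(G,I)=|I^*\setminus N(u)|+1$, again independent of the choice of $I'$, and the objective reduces to $\gr(G_u)+|I^*\setminus N(u)|+\sum_{w\in I'}(\gr(G_w)-1)$; the same non-negativity argument shows the optimum is reached at $I'=I^*\setminus N(u)$, giving $\gr(G_u)+\sum_{w\in I^*\setminus N(u)}\gr(G_w)$. Maximising over $u\in K$ produces the second term, and taking the larger of the two contributions yields the claimed equality.

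For the algorithmic part I would observe that a split partition $I^*\cup K$ and the value $n(G)$ are computable in polynomial time (the latter by scanning pairs $v,w\in K$), as are the sets $I^*\setminus N(v)$ for each $v\in K$; hence both terms and the maximum over $u\in K$ are evaluated in polynomial time once the $\gr(G_v)$ are given, and the optimising independent set $I$ is identified along the way. To exhibit an actual \textit{Gds}, I would take the sequence $S(I)$ associated with this optimal $I$ and replace each $v\in I$ by the given $S_v\in\mathcal Gr(G_v)$; by Lemma \ref{almenos} together with Theorem \ref{gdsgr} this is a \textit{Gds} of $\GR$. I do not expect a genuine obstacle here: the whole argument is a two-case analysis driven by the preceding lemma, and the only point demanding care is confirming that enlarging $I$ to its maximal admissible form never decreases the objective, which rests entirely on $\gr(G_v)\geq 1$.
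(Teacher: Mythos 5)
Your proposal is correct and takes essentially the same route as the paper: the paper obtains Theorem \ref{pseudo-splitS} directly from the preceding lemma evaluating $\gr(G,I)$ on split graphs combined with the $X$-join formula of Theorem \ref{gdsgr}, which is exactly your derivation. The only difference is that you make explicit the optimization over independent sets (using $\gr(G_v)\geq 1$ to enlarge $I$ to $I^*$ in the first case and to $\{u\}\cup\left(I^*\setminus N(u)\right)$ in the second), a step the paper leaves implicit.
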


As a direct consequence of previous theorem, a formula for the lexicographic product of a split graph and a graph $H$ can be obtained.
 
\begin{corollary} \label{lexisplits}
Let $G=(I^*\cup K, E)$ be a split graph and $H$ be a graph. 
Then, 
$\gr(G\circ H)= |I^*|\gr(H)+n(G)$.
\end{corollary}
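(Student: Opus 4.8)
The plan is to specialize Theorem~\ref{pseudo-splitS} to the case $G_v=H$ for every $v\in I^*\cup K$, since this is exactly the lexicographic product $G\circ H$. Substituting $\gr(G_v)=\gr(H)$ into the formula of that theorem, the first term becomes $\sum_{v\in I^*}\gr(H)+n(G)=|I^*|\gr(H)+n(G)$, and the second becomes $\max_{v\in K}\left\{\gr(H)+|I^*\setminus N(v)|\,\gr(H)\right\}=\max_{v\in K}\left(1+|I^*\setminus N(v)|\right)\gr(H)$. Hence it suffices to show that the second term never exceeds the first, so that the overall maximum is realized by the first term and equals the claimed value.

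The key step is to bound $|I^*\setminus N(v)|$ from above for every $v\in K$, and here I would use the standing hypothesis that $I^*$ is a maximum independent set, i.e.\ $|I^*|=\alpha(G)$. If some $v\in K$ had no neighbour in $I^*$, then $I^*\cup\{v\}$ would be an independent set of size $|I^*|+1$, contradicting maximality; therefore every $v\in K$ satisfies $|N(v)\cap I^*|\geq 1$, which gives $|I^*\setminus N(v)|\leq |I^*|-1$. Combining this with $\gr(H)\geq 0$ and $n(G)\geq 0$, for each $v\in K$ we obtain $\left(1+|I^*\setminus N(v)|\right)\gr(H)\leq |I^*|\gr(H)\leq |I^*|\gr(H)+n(G)$, so the second term is dominated by the first and the formula $\gr(G\circ H)=|I^*|\gr(H)+n(G)$ follows.

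I expect the main (indeed the only) obstacle to be recognizing that the maximality of $I^*$ is precisely what guarantees the second term stays below the first: without it, a clique vertex with no neighbour in $I^*$ would yield the strictly larger value $(1+|I^*|)\gr(H)$ and invalidate the closed formula. Once this observation is in place, the remainder is a routine substitution and a one-line inequality, and no separate treatment of the degenerate cases $K=\emptyset$ or $|K|=1$ is needed, since there $n(G)=0$ and the bound holds vacuously or directly.
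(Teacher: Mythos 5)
Your proof is correct and takes exactly the route the paper intends: the paper states this corollary as a direct consequence of Theorem~\ref{pseudo-splitS} without spelling out the verification, and your substitution $G_v=H$ together with the observation that the standing hypothesis $|I^*|=\alpha(G)$ forces every $v\in K$ to have a neighbour in $I^*$ (hence $|I^*\setminus N(v)|\leq |I^*|-1$) is precisely the missing argument that the clique term of the maximum never exceeds the independent-set term. No gaps.
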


\section{\textit{Gds}'s on graphs with few $P_4$'s}\label{SectionFew}

In this section we study \textit{Gds}'s in the following three \emph{few} $P_4$'\emph{s} graph classes. Let $U$ be a subset of vertices inducing a $P_4$ in $G$. A \emph{partner} of $U$ is a vertex $v\in G\setminus U$ such that $U\cup\{v\}$ induces at least two $P_4$ in $G$. A graph is called  \emph{partner limited graph} ($PL$, for short) if any $P_4$ in $G$ has at most two partners \cite{Roussel}. In addition, a graph is \emph{extended $P_4$-laden} ($EP_4L$, for short) if every induced subgraph with at most six vertices contains at most two induced $P_4$'s or it is $\{2K_2, C_4\}$-free \cite{laden1}.
Finally, a graph $G$ is a $(q,t)$-graph if every set of at most $q$ vertices induces at most $t$ distinct $P_4$'s \cite{qq-4}. In particular, for a fixed $q$ and $t=q-4$ we obtain the class of $(q,q-4)$-graphs. Observe that, when $q=4$ we have the class of cographs.

These classes are on the top of a widely studied hierarchy of many known graph classes containing few $P_4$'s, including cographs, $P_4$-sparse, $P_4$-lite, $P_4$-laden and $P_4$-tidy graphs (see Figure \ref{pocosp4}). Besides, their prime factors with respect to the $X$-join product are completely characterized. These facts drive to the study of combinatorial problems on these graph classes.

\medskip

\begin{figure}[h]
	\centering
		\includegraphics[scale=0.13]{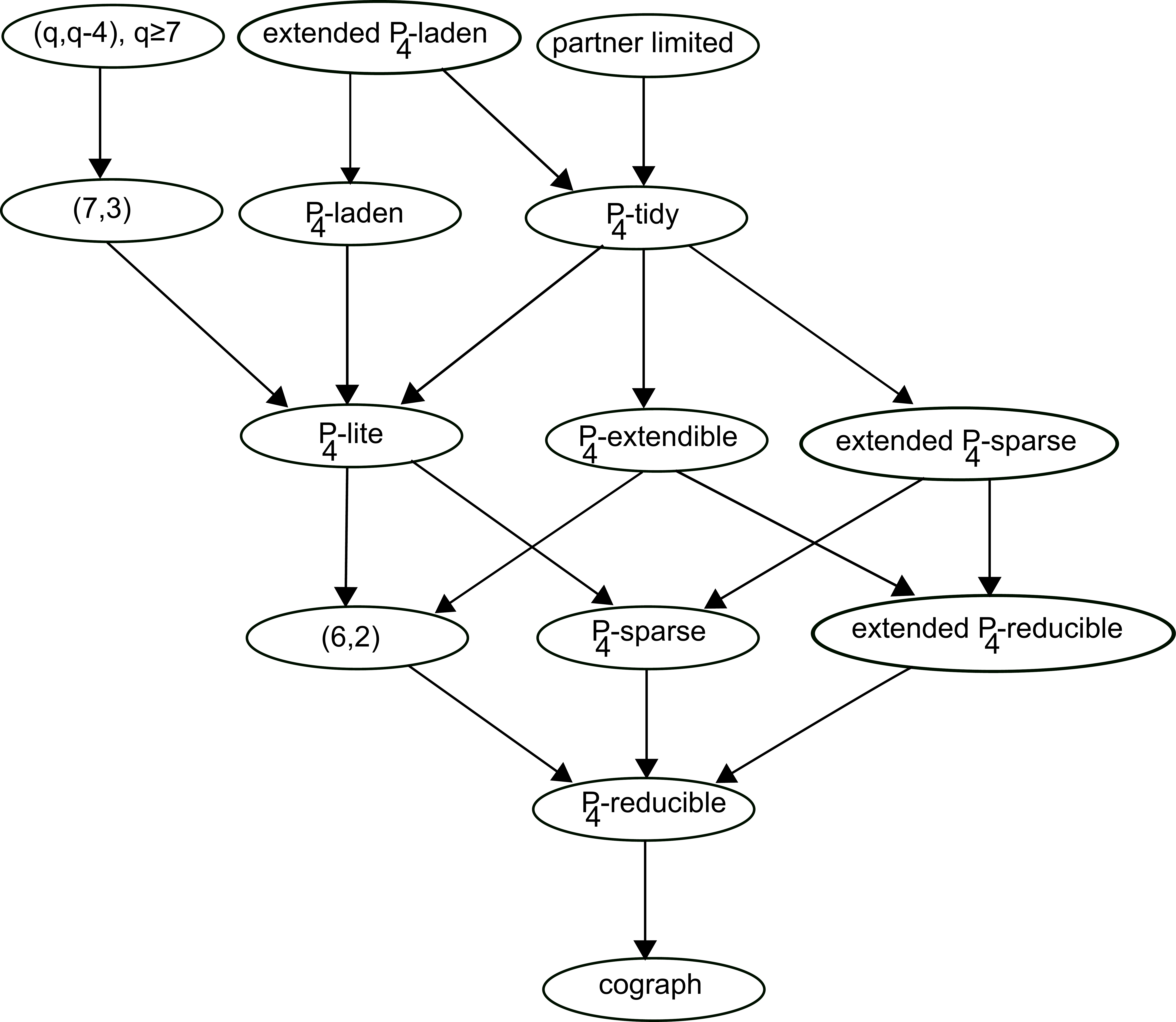}
	\caption{Graphs with few $P_4$'s}
	\label{pocosp4}
\end{figure}

As we have mentioned in the introduction, the modular decomposition of a graph $G$ obtains, in linear time, the family of its modular subgraphs and the sequence of disjoint union and complete joint operations needed in order to reconstruct $G$ from its modular subgraphs. Then, for each modular subgraph $H$ of $G$, if $H$ is not a prime graph, the procedure gives, also in linear time, the main factor $H'$ and the family $\R=\{H_v: v\in V(H)\}$ such that $H=H'\hookleftarrow \R$.

In particular, 
the prime graphs in $PL$ 
are paths and their complements, cycles and their complements, a family of graphs with at most $9$ vertices, and a subclass of split graphs \cite{Roussel}. For $EP_4L$, the prime graphs 
are $P_5$, $\overline{P_5}$ and $C_5$ and split graphs \cite{laden1}.
Besides, for $\mathcal F\in \left\{PL, EP_4L\right\}$, if $H\in\mathcal{M(F)}$ and it is not prime, then $H$ is isomorphic to a graph $\GR$ where the main factor $G$ is a split graph and every graph in $\mathcal R$ belongs to $\mathcal F$. Note that the Grundy domination number of the complement of paths and the complement of cycles with at least five vertices is $3$ and a \textit{Gds} can be easily obtained. Then, from Theorem \ref{pseudo-splitS}, we can obtain the Grundy domination number and a \textit{Gds} of any graph in $\mathcal M(\mathcal F)$ in polynomial time.

Finally, Remark \ref{union} implies the following:

\begin{theorem}\label{PLEP4L}
A \textit{Gds} can be obtained in polynomial time for $PL$ and $EP_4L$ graphs.
\end{theorem}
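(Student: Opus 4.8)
The plan is to reduce the computation of a \emph{Gds} for an arbitrary graph in $PL$ or $EP_4L$ to the modular building blocks that have already been handled, using the machinery assembled in Remark \ref{union}, Theorem \ref{pseudo-splitS}, and the structural characterizations of the prime factors cited from \cite{Roussel} and \cite{laden1}. The crux is that both of these classes are closed under taking modular subgraphs and have a completely understood family of prime graphs, so the work amounts to checking that each type of prime modular subgraph admits a polynomial-time \emph{Gds} computation, and then invoking Remark \ref{union} to lift this from the modular (connected-and-co-connected) subgraphs to the whole class.

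First I would recall, for $\mathcal F\in\{PL,EP_4L\}$, the dichotomy stated just above the theorem: if $H\in M(\mathcal F)$ then either $H$ is prime, or $H\cong\GR$ where the main factor $G$ is a split graph and every $G_v\in\mathcal R$ again lies in $\mathcal F$. I would then dispatch the prime case by going through the explicit list of prime graphs: paths and their complements, cycles and their complements, split graphs, and a finite family of bounded order (for $PL$, graphs on at most $9$ vertices; for $EP_4L$, just $P_5$, $\overline{P_5}$, $C_5$). For paths, cycles, and split graphs a \emph{Gds} is computable in polynomial time by the results already established in this paper; for the complements of paths and of cycles on at least five vertices the excerpt notes that $\gr=3$ with a readily exhibited \emph{Gds}; and the finite family has bounded order, so a \emph{Gds} is found in constant time by brute force. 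Thus every prime graph in $M(\mathcal F)$ is handled.

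Next I would treat the non-prime members of $M(\mathcal F)$. Here $H\cong\GR$ with $G$ a split graph and each $G_v\in\mathcal F$. Because $\mathcal F$ is hereditary under modular subgraphs, each $G_v$ is itself a smaller graph in the same class, so by induction on the order (or on the depth of the modular decomposition tree) a \emph{Gds}, hence $\gr(G_v)$ and a sequence $S_v\in\mathcal Gr(G_v)$, is available in polynomial time. Feeding these into Theorem \ref{pseudo-splitS} yields both $\gr(\GR)$ and an explicit \emph{Gds} of $\GR$ in polynomial time, completing the computation for every graph in $M(\mathcal F)$.

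Finally, having shown that a \emph{Gds} is polynomial-time computable for every $H\in M(\mathcal F)$, I would apply Remark \ref{union} verbatim to conclude that a \emph{Gds} is polynomial-time computable for \emph{every} graph in $\mathcal F$, which is exactly the statement of the theorem. The main obstacle I anticipate is not any single computation but the bookkeeping of the induction: one must be sure that the recursion through the modular decomposition tree terminates with strictly smaller graphs, that the prime-graph list is genuinely exhaustive (relying on the cited characterizations), and that the per-node polynomial cost multiplied over the linearly many nodes of the decomposition tree stays polynomial. Since the modular decomposition itself is computed in linear time and each node incurs only a polynomial \emph{Gds} cost, the overall bound is polynomial, but the argument hinges on correctly matching each prime type to the appropriate earlier result.
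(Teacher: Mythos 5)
Your proposal is correct and follows essentially the same route as the paper: modular decomposition, case analysis on the known prime factors (paths, cycles, their complements, split graphs, and the bounded-order family), Theorem \ref{pseudo-splitS} for the non-prime modular graphs whose main factor is a split graph, and Remark \ref{union} to lift to the whole class. Your treatment is somewhat more explicit than the paper's about the recursion through the decomposition tree and the polynomial cost accounting, but the underlying argument is identical.
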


In addition, for any fixed $q\geq 4$, the 
prime graphs in $(q,q-4)$-graphs 
are \emph{prime spider} graphs or some graphs of size at most $q$ 
(see \cite{CMR2000}). It is not hard to see that prime spider graphs are split graphs $G$ for which $\eta(G)$ is linear time computable and then, we can obtain a \textit{Gds} for prime spider graphs in linear time.
Besides, if $H\in\mathcal{M}(q,q-4)$ and it is not prime, then $H$ is isomorphic to a graph $\GR$ where the main factor $G$ is a prime spider or a graph with at most $q$ vertices and the graphs in $\mathcal R$ are trivial graphs except exactly one graph which is in $(q,q-4)$ \cite{qq-4}.
Finally, note that if $q$ is fixed, by Lemma \ref{remplazovertice}, $\gr(G_{u\hookleftarrow H})$ can be obtained in linear time for any graph $G$ with order at most $q$, $u\in V(G)$, and a graph $H$ such that $\gr(H)$ is linear time computable. 

These facts together with Theorem \ref{pseudo-splitS} and Remark \ref{union} imply the following result.

\begin{theorem}\label{Tqq-4}
A \textit{Gds} can be computed in linear time for any $(q,q-4)$-graph.
\end{theorem}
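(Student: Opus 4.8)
The plan is to proceed by structural recursion on the modular decomposition tree, using Remark \ref{union} to reduce the problem to modular subgraphs and the structural characterizations of \cite{qq-4,CMR2000} to peel one layer at a time. First I would invoke Remark \ref{union}: since the modular decomposition of a graph can be computed in linear time, it suffices to exhibit a linear-time procedure that computes a \textit{Gds} for every graph in $M((q,q-4))$, that is, for every \emph{modular} $(q,q-4)$-graph. Because the class of $(q,q-4)$-graphs is hereditary (it is defined by a condition on the induced subgraphs of bounded size), every modular subgraph arising in the decomposition is again a $(q,q-4)$-graph, so the recursion stays inside the class.

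Next, fix a modular $(q,q-4)$-graph $H$ and split into two cases according to whether $H$ is prime. If $H$ is prime, then by the cited characterization $H$ is either a prime spider or a graph on at most $q$ vertices. In the first case $H$ is a split graph whose parameter $n(H)$ is linear-time computable, so $\gr(H)=\alpha(H)+n(H)$ together with a corresponding \textit{Gds} is available in linear time (equivalently, one applies Theorem \ref{pseudo-splitS} with all factors trivial). In the second case $|V(H)|\le q$ is bounded, so a \textit{Gds} is found by brute force in constant time. If $H$ is not prime, then $H$ is isomorphic to $\GR$ where the main factor $G$ is a prime spider or a graph of order at most $q$, and every $G_v$ is trivial except for exactly one factor $H'$ which is itself a $(q,q-4)$-graph. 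I would recurse on $H'$ to obtain $\gr(H')$ and one of its \textit{Gds}'s, and then combine: when $G$ is a prime spider (hence a split graph) I apply Theorem \ref{pseudo-splitS}, feeding in $\gr(G_v)=1$ for every trivial factor and the recursively computed $\gr(H')$; when $G$ has order at most $q$ I apply Lemma \ref{remplazovertice}, which gives $\gr(H)=\gr(G_{u\hookleftarrow H'})=\max\{\gr(G),\gr^u(G)+\gr(H')-1\}$, where $\gr(G)$ and $\gr^u(G)$ are computable in constant time since $G$ has bounded order.

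Finally I would verify the overall linear-time bound. Each non-prime layer is peeled off at a cost that is either constant (bounded main factor, via Lemma \ref{remplazovertice}) or linear in the size of the split quotient (prime-spider main factor, via Theorem \ref{pseudo-splitS}); the prime leaves cost constant or linear in their own size, as above. Since the modular decomposition tree has linearly many nodes and the total size of the quotient graphs at its internal nodes is linear in $n$, summing these local costs yields $O(n)$ overall, on top of the linear-time modular decomposition. The main obstacle I anticipate is precisely this bookkeeping: one must argue that the recursion on the single non-trivial factor, together with the per-node combining work, telescopes to a genuinely \emph{linear} total rather than merely polynomial. This hinges on the bounded-order/split dichotomy keeping every local computation cheap and on the fact that the sizes of the quotients sum to $O(n)$ across the whole decomposition tree, so that no vertex is processed more than a bounded number of times along its root-to-leaf path.
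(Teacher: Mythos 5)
Your proposal is correct and follows essentially the same route as the paper: Remark \ref{union} to reduce to modular $(q,q-4)$-graphs, the characterization of prime graphs as prime spiders or graphs of order at most $q$, Theorem \ref{pseudo-splitS} for the spider (split) main factors, and Lemma \ref{remplazovertice} for the bounded-order main factors with a single non-trivial module. Your explicit accounting of why the recursion telescopes to linear total time is in fact more careful than the paper's, which simply asserts the combination of these facts yields the result.
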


\section{Concluding remarks}

The here presented results generalize the results on \textit{Gds}'s and the lexicographic product of graphs presented in \cite{Bresar1} in two directions. First, we extend the results on this product to a more general one, the $X$-join product of graphs. In second place, we enlarge the family of graphs where closed formulas for Grundy domination number are known. 
Moreover, we also give similar results for split graphs.

We consider the $X$-join product of a power of cycles or power of paths with a family of graphs with given Grundy domination numbers, and show that the Grundy domination number of this product can be obtained in polynomial time based on a polynomial reduction to the $MWIS$ problem.


In a similar way, the results in this paper include generalizations of results on \textit{Gds}'s for split graphs and cographs, given in 
\cite{martin}. Indeed, since $EP_4L$ graphs is a superclass of split graphs, Theorem \ref{PLEP4L} provides a superclass of these graphs where obatining a \textit{Gds} is polynomial-time solvable and Theorem \ref{Tqq-4} gives a superclass of cographs where the problem in linear time solvable.

Additionally, the superclass of $EP_4L$ called \emph{fat-extended $P_4$ laden} was introduced in \cite{ALS2012} from the modular decomposition of $EP_4L$ considering that the graphs $P_5$, $\overline{P_5}$ and $C_5$ are not only prime graphs but also main factors of modular graphs in the graph class. Then, note that the reasoning applied in Section \ref{SectionFew} infers that a \textit{Gds} can be computed in polynomial time for any fat-extended $P_4$ laden graph.


\end{document}